\def\1{\raisebox{2pt}{\rm{$\chi$}}}
\newcommand{\R}{\mathbb{R}}
\newcommand{\N}{\mathbb{N}}
\newcommand{\abs}[1]{\left| #1 \right|}
\newcommand{\ol}{\overline}
\newcommand{\Om}{\Omega}
\newcommand{\dist}{\operatorname{dist}}
\newcommand{\M}{\mathcal{M}}
\newcommand{\Tr}{\mathrm{Tr}}
\renewcommand{\L}{\mathcal{L}}
\newcommand{\Rn}{\mathbb{R}^N}
\newcommand{\eps}{\varepsilon}
\renewcommand{\S}{\mathcal{S}}
\newcommand{\norm}[1]{\left\|#1\right\|} 
\newcommand{\prodin}[2]{\langle #1,#2 \rangle} 
\theoremstyle{plain}
\newtheorem{definition}{Definition}[section]
\newtheorem{proposition}[definition]{Proposition}
\newtheorem{theorem}[definition]{Theorem}
\newtheorem{corollary}[definition]{Corollary}
\newtheorem{lemma}[definition]{Lemma}
\newtheorem{remark}[definition]{Remark}
\theoremstyle{definition}
\theoremstyle{remark}
\numberwithin{equation}{section}
\def\vint_#1{\mathchoice%
          {\mathop{\kern 0.2em\vrule width 0.6em height 0.69678ex depth -0.58065ex
                  \kern -0.8em \intop}\nolimits_{\kern -0.4em#1}}%
          {\mathop{\kern 0.1em\vrule width 0.5em height 0.69678ex depth -0.60387ex
                  \kern -0.6em \intop}\nolimits_{#1}}%
          {\mathop{\kern 0.1em\vrule width 0.5em height 0.69678ex
              depth -0.60387ex
                  \kern -0.6em \intop}\nolimits_{#1}}%
          {\mathop{\kern 0.1em\vrule width 0.5em height 0.69678ex depth -0.60387ex
                  \kern -0.6em \intop}\nolimits_{#1}}}
                  \newcommand{\aveint}[2]{\mathchoice%
          {\mathop{\kern 0.2em\vrule width 0.6em height 0.69678ex depth -0.58065ex
                  \kern -0.8em \intop}\nolimits_{\kern -0.45em#1}^{#2}}%
          {\mathop{\kern 0.1em\vrule width 0.5em height 0.69678ex depth -0.60387ex
                  \kern -0.6em \intop}\nolimits_{#1}^{#2}}%
          {\mathop{\kern 0.1em\vrule width 0.5em height 0.69678ex depth -0.60387ex
                  \kern -0.6em \intop}\nolimits_{#1}^{#2}}%
          {\mathop{\kern 0.1em\vrule width 0.5em height 0.69678ex depth -0.60387ex
                  \kern -0.6em \intop}\nolimits_{#1}^{#2}}}
\begin{document}

\title[Tug-of-war and Krylov-Safonov]
{H\"older estimate for a tug-of-war game with $1<p<2$ from Krylov-Safonov regularity theory}

\author[Arroyo]{\'Angel Arroyo}
\address{MOMAT Research Group, Interdisciplinary Mathematics Institute, Department of Applied Mathematics and Mathematical Analysis, Universidad Complutense de Madrid, 28040 Madrid, Spain}
\email{ar.arroyo@ucm.es}

\author[Parviainen]{Mikko Parviainen}
\address{Department of Mathematics and Statistics, University of Jyv\"askyl\"a, PO~Box~35, FI-40014 Jyv\"askyl\"a, Finland}
\email{mikko.j.parviainen@jyu.fi}

\date{\today}

\keywords{ABP-estimate, elliptic non-divergence form partial differential  equation with bounded and measurable coefficients, dynamic programming principle,  local H\"older estimate, p-Laplacian, Pucci extremal operator, tug-of-war with noise} 
\subjclass[2020]{35B65, 35J15, 35J92,  91A50}

\maketitle

\begin{abstract}
We propose a new version of the tug-of-war game and a corresponding dynamic programming principle related to the $p$-Laplacian with $1<p<2$. For this version, the asymptotic H\"older continuity of solutions can be directly derived  from recent Krylov-Safonov type regularity results in the singular case. Moreover, existence of a measurable solution can be obtained without using boundary corrections. We also establish a comparison principle.
\end{abstract}


\section{Introduction}

In Section 7 of \cite{arroyobp} we show that a solution to the dynamic programming principle
\begin{equation}\label{usual-DPP}
	u(x)
	=
	\frac{\alpha}{2}\bigg(\sup_{h\in B_1}u(x+\eps h)+\inf_{h\in B_1}u(x+\eps h)\bigg)
	+
	\beta\vint_{B_1}u(x+\eps h)\,dh
	+
	\eps^2f(x)
\end{equation}
with $\beta=1-\alpha\in(0,1]$ and $x\in\Omega\subset\R^N$ satisfies certain extremal inequalities, and thus has asymptotic H\"older regularity directly by the Krylov-Safonov theory developed in that paper. This dynamic programming principle corresponds to a version of the tug-of-war game with noise as explained in \cite{manfredipr12}, and it is linked to the $p$-Laplacian when $p\geq 2$ with suitable choice of probabilities $\alpha=\alpha(N,p)$ and $\beta=\beta(N,p)$. Interested readers can consult the references \cite{peress08}, \cite{blancr19}, \cite{lewicka20} and \cite{parviainenb} for more information about the tug-of-war games.

In the case $1<p<2$, one usually considers a variant of the game known as the tug-of-war game with orthogonal noise as in \cite{peress08}, although there is  also other recent variant covering this range and not using orthogonal noise  but measures absolutely continuous with respect to the $N$-dimensional Lebesgue measure \cite{lewicka21}. An inconvenience of the ``orthogonal noise'' approach of \cite{peress08} is that the uniform part of the measure is supported in $(N-1)$-dimensional balls. Thus we do not expect that solutions to the corresponding dynamic programming principle satisfy the extremal inequalities required for the Krylov-Safonov type regularity estimates obtained in \cite{arroyobp,arroyobp2}. 

Second, there are some deep measurability issues related to the corresponding dynamic programming principles, and this introduces some difficulties in the existence and measurability proofs in the case $1<p<2$, as explained at the beginning of Section~\ref{sec:exist}. As a matter of fact, in  \cite{hartikainen16} and \cite{arroyohp17} a modification near the boundary was necessary in order to guarantee the measurability.

 For these reasons, and with the purpose of covering the case $1<p<2$, we propose a different variant of the tug-of-war game that can be described by a dynamic programming principle having a uniform part in a $N$-dimensional ball in (\ref{eq:dpp}) below. For this variant, no boundary modifications are needed in the existence proof (Theorem~\ref{thm:existence}) because of better continuity properties that are addressed in Section~\ref{sec:existence}.  We also establish a comparison principle and thus uniqueness of solutions (Theorem~\ref{thm:comparison}). Moreover, the solutions to this dynamic programming principle are asymptotically H\"older continuous (Corollary~\ref{cor:holder}) and converge, passing to a subsequence if necessary, to a solution of the normalized $p$-Laplace equation (Theorem~\ref{thm:convergence}).

\section{Preliminaries}

We denote by $B_1$ the open unit ball of $\R^N$ centered at the origin. For $|z|=1$ we introduce the following notation,
\begin{equation}\label{operator-I}
	\mathcal{I}^z_\eps u(x)
	:=
	\frac{1}{\gamma_{N,p}}\vint_{B_1}u(x+\eps h)(z\cdot h)_+^{p-2}\,dh,
\end{equation}
for each Borel measurable bounded function $u$, where $\gamma_{N,p}$ is the normalization constant
\begin{equation}\label{gamma-ctt}
	\gamma_{N,p}
	:=
	\vint_{B_1}(z\cdot h)_+^{p-2}\,dh
	=
	\frac{1}{2}\vint_{B_1}|h_1|^{p-2}\,dh,
	\end{equation}
which is independent of the choice of $|z|=1$. Here, we have used the following notation
\begin{equation}
\label{eq:plusfunction}
	(t)_+^{p-2}
	=
	\begin{cases}
		t^{p-2} & \text{ if } t>0,\\
		0 & \text{ if } t\leq 0.
	\end{cases}
\end{equation}
We remark that $t\mapsto(t)^{p-2}_+$ is a continuous function in $\R$ when $p>2$, but it presents a discontinuity at $t=0$ when $1<p\leq 2$.
We  observe integrating  for example over a cube containing $B_1$ that $\gamma_{N,p}<\infty$ for any $p>1$. Later we compute the precise value of $\gamma_{N,p}$ in (\ref{eq:precise-constant}) but we immediately observe that $\gamma_{N,p}>\frac{1}{2}$ if  $1<p<2$ since
\begin{equation*}
	2\gamma_{N,p}
	=
	\vint_{B_1}|z\cdot h|^{p-2}\,dh
	\geq
	\vint_{B_1}|h|^{p-2}\,dh
	>
	1.
\end{equation*}

Throughout the paper $\Omega\subset\R^N$ denotes a bounded domain. For $\eps>0$, we define the $\eps$-neighborhood of $\Omega$ as
\begin{align*}
\Omega_\eps=\{x\in\R^N\,:\,\dist(x,\Omega)<\eps\}.
\end{align*}
Let $f:\Omega\to\R$ be a Borel measurable bounded function. We consider a dynamic programming principle (DPP)
\begin{align}
\label{eq:dpp}
	u(x)
	=
	\frac{1}{2}\Big(\sup_{|z|=1}\mathcal{I}^z_\eps u(x)+\inf_{|z|=1}\mathcal{I}^z_\eps u(x)\Big)
	+
	\eps^2f(x)
\end{align}
for $x\in\Omega$, with prescribed Borel measurable bounded boundary values $g:\Omega_\eps\setminus\Omega\to\R$. The parameter $p$ above is linked to the $p$-Laplace operator as explained in Section~\ref{sec:p-lap}.

Next we recall the asymptotic H\"older continuity result derived in \cite{arroyobp} and \cite{arroyobp2}. The results there apply to a quite general class of discrete stochastic processes with bounded and measurable increments and their expectations, or equivalently functions satisfying the corresponding dynamic programming principles. Moreover, the results actually hold for functions merely satisfying inequalities given in terms of extremal operators, which we recall below. This can be compared with the H\"older result for PDEs given in terms of Pucci operators (see for example \cite{caffarellic95} and \cite{krylovs79,krylovs80,trudinger80}).

For $\Lambda\geq1$, let $\M(B_\Lambda)$, as in those papers, denote the set of symmetric unit Radon measures with support in $B_\Lambda$ and $\nu:\R^N\to \M(B_\Lambda)$ such that
\begin{equation*}
	x\mapsto\int_{B_\Lambda} u(x+h) \,d\nu_x(h)
\end{equation*}
defines a Borel measurable function for every Borel measurable $u:\R^N\to \R$. By symmetric we mean that
\begin{align*}
\nu_x(E)=\nu_x(-E)
\end{align*}
holds for every measurable set $E\subset\R^N$.

\begin{definition}[Extremal operators]
\label{def:pucci}
Let $u:\R^N\to\R$ be a Borel measurable bounded function. We define the extremal Pucci type operators
\begin{equation*}
\begin{split}
	\L_\eps^+ u(x)
	:\,=
	~&
	\frac{1}{2\eps^2}\bigg(\alpha \sup_{\nu\in \M(B_\Lambda)} \int_{B_\Lambda}\delta u(x,\eps h) \,d\nu(h)+\beta\vint_{B_1} \delta u(x,\eps h)\,dh\bigg)
	\\
	=
	~&
	\frac{1}{2\eps^2}\bigg(\alpha \sup_{h\in B_\Lambda} \delta u(x,\eps h) +\beta\vint_{B_1} \delta u(x,\eps h)\,dh\bigg)
\end{split}
\end{equation*}
and
\begin{equation*}
\begin{split}
	\L_\eps^- u(x)
	:\,=
	~&
	\frac{1}{2\eps^2}\bigg(\alpha \inf_{\nu\in \M(B_\Lambda)} \int_{B_\Lambda}\delta u(x,\eps h) \,d\nu(h)+\beta\vint_{B_1} \delta u(x,\eps h)\,dh\bigg)
	\\
	=
	~&
	\frac{1}{2\eps^2}\bigg(\alpha \inf_{h\in B_\Lambda} \delta u(x,\eps h) +\beta\vint_{B_1} \delta u(x,\eps h)\,dh\bigg),
\end{split}
\end{equation*}
where $\delta u(x,\eps h)=u(x+\eps h)+u(x-\eps h)-2u(x)$ for every $h\in B_\Lambda$.
\end{definition} 
Naturally also other domains of definition are possible instead of $\R^N$ above.
\begin{theorem}[Asymptotic H\"older, \cite{arroyobp, arroyobp2}]
\label{Holder}
There exists $\eps_0>0$ such that if $u$ satisfies $\L_\eps^+ u\ge -\rho$ and $\L_\eps^- u\le  \rho$ in $B_{R}$ for some $1-\alpha=\beta>0$ where $\eps<\eps_0R$, there exist $C,\gamma>0$  such that
\[
|u(x)-u(y)|\leq \frac{C}{R^\gamma}\left(\sup_{B_{R}}|u|+R^2\rho\right)\big(|x-y|^\gamma+\eps^\gamma\big)
\]
for every $x, y\in B_{R/2}$.
\end{theorem}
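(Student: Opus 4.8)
The plan is to follow the Krylov--Safonov / Caffarelli--Cabr\'e scheme, adapted to the discrete scale $\eps$ and to the nonlocal extremal operators of Definition~\ref{def:pucci}; this is exactly the route taken in \cite{arroyobp,arroyobp2}. By scaling one may assume $R=1$ and, after normalizing, $\sup_{B_1}|u|+\rho\le 1$. Everything then reduces to a single \emph{oscillation reduction step}: there exist $\eta\in(0,1)$ and $\eps_0>0$, depending only on $N,\Lambda,\alpha,\beta$, such that if $u$ satisfies $\L_\eps^+u\ge-\rho$ and $\L_\eps^-u\le\rho$ in $B_1$ with $\eps<\eps_0$, $\operatorname{osc}_{B_1}u\le1$ and $\rho\le\eps_0$, then $\operatorname{osc}_{B_{1/2}}u\le1-\eta$. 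Iterating this on the dyadic balls $B_{2^{-k}}$ gives $\operatorname{osc}_{B_{2^{-k}}}u\le(1-\eta)^k$ as long as $2^{-k}$ stays above a fixed multiple of $\eps$ (below that scale the hypothesis $\eps<\eps_0\cdot 2^{-k}$ fails and one simply stops, which is the source of the additive $\eps^\gamma$ term); summing the resulting geometric estimate over the dyadic annuli yields the stated bound with $\gamma=\min\{1,\log_2\frac{1}{1-\eta}\}$, and the factors $R^{-\gamma}$, $R^2\rho$ are recovered by undoing the scaling.

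The oscillation step itself is obtained from a discrete weak Harnack inequality. Suppose $v\ge0$ in a large ball $B_{2\sqrt N}$, $\L_\eps^-v\le\rho$, and $\inf_{B_{1/2}}v\le1$; the target is a measure estimate $|\{v\le M\}\cap B_1|\ge\mu\,|B_1|$ with $M,\mu$ independent of $\eps$. Applying this to $v=(\operatorname{osc}_{B_1}u)^{-1}(u-\inf_{B_1}u)$, which satisfies $\L_\eps^-v\le\rho'$, and to the reflection $\tilde v=(\operatorname{osc}_{B_1}u)^{-1}(\sup_{B_1}u-u)$, which satisfies $\L_\eps^-\tilde v\le\rho'$ as well (here one uses that $\L_\eps^+u\ge-\rho$ translates into an upper bound for $\L_\eps^-$ of the reflected function), one finds that the sets where $u$ is within $M^{-1}\operatorname{osc}$ of its infimum and of its supremum cannot both fill a definite fraction of $B_1$; comparing the two measures against $|B_1|$ forces the quantified drop of the oscillation on $B_{1/2}$.

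The core is therefore the measure estimate, and this is where the genuinely delicate work lies. First one needs a \emph{discrete ABP (Aleksandrov--Bakelman--Pucci) estimate}: sliding paraboloids of opening $\sim t$ from below under $v$ and bounding the measure of the contact set from below in terms of $\sup_{B_1}(-v)^+$ and $t$, \emph{uniformly in} $\eps$. Nonlocality enters here because $\delta u(x,\eps h)$ is sampled over $|h|\le\Lambda$, so the convex-envelope argument must be run on a mesh of size $\sim\eps$ and the second-difference contribution along the contact set controlled even though the operator probes at distance $\Lambda\eps$ rather than infinitesimally; crucially, the nondegenerate uniform term $\beta\vint_{B_1}\delta u(x,\eps h)\,dh$ makes the contact-set integrand coercive, which is precisely the structural role of the hypothesis $\beta>0$. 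Second one builds a smooth \emph{barrier} $\Phi$ supported in $B_{2\sqrt N}$ with $\L_\eps^-\Phi\le C_\Phi\,\mathbf 1_{B_{1/2}}+C\rho$, $\Phi\le0$ outside $B_{2\sqrt N}$ and $\Phi\ge c>0$ on $B_1$; applying the discrete ABP to $v+\Phi$ produces the first measure estimate at one scale. Upgrading ``$|\{v\le M\}\cap B_1|\ge\mu$'' to the full power-law decay $|\{v>M^k\}\cap B_1|\le C(1-\mu)^k$ is then a discrete Calder\'on--Zygmund cube (``ink-spots'') argument, with constants again independent of $\eps$ once $\eps$ is below the size of the smallest cubes subdivided.

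I expect the discrete ABP estimate with $\eps$-independent constants to be the main obstacle: one must reconcile the infinitesimal touching-paraboloid mechanism with an operator that averages over a fixed-size ball at scale $\eps$, tracking carefully how the opening of the paraboloid, the mesh size, and the noise radius $\Lambda$ interact, and ensuring the threshold $\eps_0$ that appears is uniform. The singular range $1<p<2$ creates no additional difficulty at this level, since the only structural input needed is $\beta=\beta(N,p)>0$; the dependence on $p$ enters solely through the values of $\alpha,\beta$ (and hence of $C,\gamma$), while the statement is purely in terms of the extremal operators $\L_\eps^{\pm}$.
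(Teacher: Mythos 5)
First, a point of comparison: this paper does not prove Theorem~\ref{Holder} at all --- it is imported verbatim from \cite{arroyobp,arroyobp2}, and the only work done here is to verify (in Proposition~\ref{prop:satisfies-extremal}) that solutions of the DPP satisfy the hypotheses $\L_\eps^+u\ge-\rho$, $\L_\eps^-u\le\rho$. Your outline does correctly reconstruct the Krylov--Safonov program that the cited references carry out: rescale, prove a measure/growth estimate for supersolutions of $\L_\eps^-$ via an ABP-type estimate plus a barrier, upgrade it to power decay by a Calder\'on--Zygmund (ink-spots) covering, deduce an oscillation reduction by applying the estimate to $u-\inf u$ and $\sup u-u$ (using that $\L_\eps^+u\ge-\rho$ gives $\L_\eps^-(-u)\le\rho$), and iterate dyadically, stopping at scales comparable to $\eps$, which is exactly where the additive $\eps^\gamma$ comes from. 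The scaling bookkeeping ($\eps\mapsto\eps/R$, $\rho\mapsto R^2\rho$) and the role of $\beta>0$ as the sole structural hypothesis are also identified correctly.

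However, as a proof the proposal has a genuine gap: the entire quantitative core is named but not established. The discrete ABP estimate with constants uniform in $\eps$ (which you yourself flag as the main obstacle), the construction of the barrier $\Phi$ with $\L_\eps^-\Phi\le C_\Phi\,\1_{B_{1/2}}+C\rho$, and the $\eps$-independent ink-spots/power-decay lemma are precisely the content of \cite{arroyobp,arroyobp2}; without executing them there is no argument, only a plan. In particular, the step you describe as ``sliding paraboloids on a mesh of size $\sim\eps$ and controlling the second differences at distance $\Lambda\eps$'' is exactly where the classical infinitesimal touching argument breaks and where the cited papers must (and do) work hard; asserting that the uniform term $\beta\vint_{B_1}\delta u(x,\eps h)\,dh$ ``makes the contact-set integrand coercive'' is a heuristic, not a proof. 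Likewise, the deduction of the oscillation drop from the two measure estimates, and the verification that the dyadic iteration survives down to scale $\approx\eps/\eps_0$ with the claimed exponent $\gamma$, are only sketched. So the route is the right one and matches the source of the theorem, but the proposal should be regarded as a roadmap to \cite{arroyobp,arroyobp2} rather than an independent proof.
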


It is worth remarking that the constants $C$ and $\gamma$ are independent of $\eps$, and depend exclusively on $N$, $\Lambda\geq 1$ and $\beta=1-\alpha\in(0,1]$. Also a version of Harnack's inequality \cite[Theorem 5.5]{arroyobp2} holds if the extremal inequalities are satisfied for some $\beta>0$. For a different approach for regularity in the case of tug-of-war games, see \cite{luirops13} ($p>2$) and \cite{luirop18} ($p>1$).

\section{Existence of measurable solutions with $1<p<\infty$}
\label{sec:existence}

In this section we prove existence and uniqueness of solutions to the DPP (\ref{eq:dpp}). In addition, when $1<p<2$, such  DPP  satisfies the requirements to get the asymptotic H\"older estimate from Theorem~\ref{Holder}. The regularity result and the connection of such a  DPP  (as well as the corresponding tug-of-war game) to the $p$-Laplacian are addressed in Sections~\ref{sec:reg} and \ref{sec:p-lap}, respectively.

\begin{remark}\label{rem-average}
Observe that the operator $\mathcal{I}^z_\eps$ defined in (\ref{operator-I}) is a linear average for each $|z|=1$, in the sense that $\mathcal{I}^z_\eps$ satisfies the following:
\begin{enumerate}
\item[\textit{i)}] stability: $\displaystyle\inf_{B_\eps(x)}u\leq\mathcal{I}^z_\eps u(x)\leq\sup_{B_\eps(x)}u$;
\item[\textit{ii)}] monotonicity: $\mathcal{I}^z_\eps u\leq\mathcal{I}^z_\eps v$ for $u\leq v$;
\item[\textit{iii)}] linearity: $\mathcal{I}^z_\eps(au+bv)=a\,\mathcal{I}^z_\eps u+b\,\mathcal{I}^z_\eps v$ for  $a,b\in\R$.
\end{enumerate}
\end{remark}

\subsection{Continuity estimates for $\mathcal{I}^z_\eps u$}

Given a Borel measurable bounded function $u:\Omega_\eps\to\R$, we show that the function $\mathcal{I}^z_\eps u(x)$ is continuous with respect to $x\in\overline\Omega$ and $|z|=1$. In fact, we prove that $(x,z)\mapsto\mathcal{I}^z_\eps u(x)$ is uniformly continuous. As a consequence of this, the function
\begin{equation*}
	x
	\longmapsto
	\frac{1}{2}\Big(\sup_{|z|=1}\mathcal{I}^z_\eps u(x)+\inf_{|z|=1}\mathcal{I}^z_\eps u(x)\Big)
\end{equation*}
is continuous in $\overline\Omega$ as shown in Lemma~\ref{lem:sup-inf-cont}.

\begin{lemma}
\label{lem:continuous-wrtz}
Let $\Omega\subset\R^N$. For $u:\Omega_\eps\to\R$ a Borel measurable bounded function and $x\in\overline\Omega$, the function $$z\longmapsto\mathcal{I}^z_\eps u(x)$$ is continuous on $|z|=1$. Moreover, the family $\{z\mapsto\mathcal{I}^z_\eps u(x)\,:\,x\in\overline\Omega\}$ is equicontinuous on $|z|=1$.
\end{lemma}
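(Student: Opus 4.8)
The plan is to reduce the claim to a single uniform continuity estimate for the map $(x,z)\mapsto\mathcal{I}^z_\eps u(x)$, and to prove that estimate by an explicit change of variables in the defining integral (\ref{operator-I}). First I would fix $|z|=|z'|=1$ and $x,x'\in\overline\Omega$ and write
\[
\mathcal{I}^z_\eps u(x)-\mathcal{I}^{z'}_\eps u(x')
=
\frac{1}{\gamma_{N,p}}\vint_{B_1}u(x+\eps h)(z\cdot h)_+^{p-2}\,dh
-\frac{1}{\gamma_{N,p}}\vint_{B_1}u(x'+\eps h)(z'\cdot h)_+^{p-2}\,dh .
\]
Since the claim is only about continuity in $z$ with an equicontinuity statement uniform in $x$, it in fact suffices to take $x=x'$; the genuine issue is comparing the two weight functions $h\mapsto(z\cdot h)_+^{p-2}$ and $h\mapsto(z'\cdot h)_+^{p-2}$ in $L^1(B_1)$, because $u$ is only bounded (and measurable), so we must bound everything by $\norm{u}_{L^\infty(\Omega_\eps)}$ times the $L^1$-distance of the weights:
\[
\abs{\mathcal{I}^z_\eps u(x)-\mathcal{I}^{z'}_\eps u(x)}
\leq
\frac{\norm{u}_{L^\infty(\Omega_\eps)}}{\gamma_{N,p}}\vint_{B_1}\abs{(z\cdot h)_+^{p-2}-(z'\cdot h)_+^{p-2}}\,dh .
\]
The right-hand side is manifestly independent of $x$, which will give the equicontinuity for free once it is shown to go to $0$ as $|z-z'|\to 0$.

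The main obstacle is precisely that last $L^1$-continuity of the weights, and here the singularity of $t\mapsto(t)_+^{p-2}$ at $t=0$ when $1<p<2$ must be handled, since one cannot simply bound the integrand pointwise. I would argue as follows. Let $R\in SO(N)$ be a rotation with $Rz=z'$; then, substituting $h\mapsto Rh$ (which preserves $B_1$ and the normalized measure $\vint_{B_1}$), $\vint_{B_1}(z'\cdot h)_+^{p-2}\,dh=\vint_{B_1}(z\cdot h)_+^{p-2}\,dh$, and more usefully
\[
\vint_{B_1}\abs{(z\cdot h)_+^{p-2}-(z'\cdot h)_+^{p-2}}\,dh
=\vint_{B_1}\abs{(z\cdot h)_+^{p-2}-(z\cdot R^{-1}h)_+^{p-2}}\,dh .
\]
Now I would split $B_1$ into the ``good'' region $\{h:\abs{z\cdot h}>\tau\}\cap\{h:\abs{z\cdot R^{-1}h}>\tau\}$ and the ``bad'' region where one of the two arguments is at most $\tau$ in absolute value. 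On the good region the map $t\mapsto(t)_+^{p-2}$ is Lipschitz (with constant $\le(2-p)\tau^{p-3}$), so the integrand is bounded by $C_\tau\abs{(z\cdot h)-(z\cdot R^{-1}h)}=C_\tau\abs{z\cdot(I-R^{-1})h}\le C_\tau\,\norm{I-R^{-1}}$, which tends to $0$ as $R\to I$, i.e. as $z'\to z$. On the bad region I would use the integrability of the singularity: the set $\{h\in B_1:\abs{z\cdot h}\le\tau\}$ is a slab of width $2\tau$ and one checks $\vint_{\{\abs{z\cdot h}\le\tau\}}\abs{z\cdot h}^{p-2}\,dh\to 0$ as $\tau\to0$ (the integrand is in $L^1$ of the slab, uniformly, because $\int_0^\tau s^{p-2}\,ds=\tfrac{\tau^{p-1}}{p-1}$), and similarly for the slab $\{\abs{z\cdot R^{-1}h}\le\tau\}$; hence the contribution of the bad region is $\le\omega(\tau)$ for some modulus $\omega$ independent of $z,z'$. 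Combining, given $\delta>0$ first choose $\tau$ so that the bad-region term is $<\delta/2$, then choose $|z-z'|$ small enough (equivalently $\norm{I-R^{-1}}$ small) that the good-region term is $<\delta/2$; this proves the map $z\mapsto\mathcal{I}^z_\eps u(x)$ is uniformly continuous with a modulus depending only on $N$, $p$, $\eps$ and $\norm{u}_{L^\infty(\Omega_\eps)}$, but not on $x\in\overline\Omega$, which is exactly the asserted continuity together with equicontinuity of the family.

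One small technical point I would address explicitly: for the substitution $h\mapsto Rh$ to be legitimate on $\vint_{B_1}$, I use that $R$ is an orthogonal map so it maps $B_1$ onto $B_1$ and has Jacobian $1$, hence preserves both $|B_1|$ and Lebesgue measure; and for the slab estimate I only need the one-dimensional computation above together with Fubini in coordinates adapted to $z$ (or $z'$). Everything else is the routine triangle-inequality bookkeeping sketched above, so I would state the slab estimate as a short displayed computation and keep the rest brief. I expect the write-up to be only about a page, with the $\tau$-splitting being the one place that genuinely uses $1<p$ (for integrability of $s^{p-2}$ near $0$) as opposed to just $p>0$.
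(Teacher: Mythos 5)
Your proposal is correct, and its first step coincides with the paper's: you bound $\abs{\mathcal{I}^z_\eps u(x)-\mathcal{I}^{w}_\eps u(x)}$ by $\gamma_{N,p}^{-1}\norm{u}_\infty\vint_{B_1}\abs{(z\cdot h)_+^{p-2}-(w\cdot h)_+^{p-2}}\,dh$, observe that this is independent of $x$, and reduce everything to the $L^1(B_1)$-continuity of the weights. Where you genuinely diverge is the singular case $1<p<2$: the paper handles it by the algebraic inequality $\abs{a-b}\le(a+b)\bigl(1-\tfrac{\min\{a,b\}}{\max\{a,b\}}\bigr)$ followed by H\"older's inequality (with an exponent tied to $\gamma_{N,\frac{p+1}{2}}$) and the Dominated Convergence Theorem applied to the remaining bounded factor, whereas you use a quantitative good/bad splitting at level $\tau$: Lipschitz continuity of $t\mapsto t^{p-2}$ on $[\tau,\infty)$ with constant $(2-p)\tau^{p-3}$ away from the hyperplane $\{z\cdot h=0\}$, and the uniform slab estimate $\int_0^\tau s^{p-2}\,ds=\tau^{p-1}/(p-1)$ near it. Your route is more elementary (no H\"older trick, no DCT in the hard case) and yields an explicit modulus of continuity, uniform in $z$ as well as in $x$, roughly of size $\tau^{p-1}+\tau^{p-3}\abs{z-w}$, which one can optimize in $\tau$; the paper's route avoids the region bookkeeping but is purely qualitative. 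Two small points you should make explicit in a write-up: on the good region $\{\abs{z\cdot h}>\tau\}\cap\{\abs{z\cdot R^{-1}h}>\tau\}$ the function $(t)_+^{p-2}$ is not Lipschitz across $t=0$ (it jumps from $0$ to at least $\tau^{p-2}$), so you need both arguments to have the same sign, which indeed holds as soon as $\norm{I-R^{-1}}<2\tau$ --- consistent with your order of choices (first $\tau$, then $\abs{z-w}$ small); and on the bad region the cross terms must be controlled, e.g.\ $(z\cdot h)_+^{p-2}$ integrated over the slab $\{\abs{z\cdot R^{-1}h}\le\tau\}\setminus\{\abs{z\cdot h}\le\tau\}$ is at most $\tau^{p-2}$ times a set of measure $O(\tau)$, hence $O(\tau^{p-1})$. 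With these two lines added, your argument is complete and proves the lemma (in fact in a slightly sharpened, quantitative form).
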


\begin{proof} For $|z|=|w|=1$ we have
\begin{equation*}
	\left|\mathcal{I}^z_\eps u(x)-\mathcal{I}^w_\eps u(x)\right|
	\leq
	\frac{\|u\|_\infty}{\gamma_{N,p}}\vint_{B_1}\big|(z\cdot h)^{p-2}_+-(w\cdot h)^{p-2}_+\big|\,dh,
\end{equation*}
uniformly for every $x\in\overline\Omega$. We claim that the limit
\begin{equation}\label{claim}
	\lim_{|z-w|\to 0}\vint_{B_1}\big|(z\cdot h)^{p-2}_+-(w\cdot h)^{p-2}_+\big|\,dh
	=
	0
\end{equation}
holds for every $1<p<\infty$.  Observe that the limit in (\ref{claim}) is independent of $x$ and $u$, and thus it holds uniformly for every $x\in\overline\Omega$, then the (uniform) equicontinuity of $\mathcal{I}^z_\eps u$ in $\Omega$ follows.

\textit{i) Case $p=2$.} Since $(t)^0_+=\1_{(0,\infty)}(t)$, we have
\begin{equation*}
\begin{split}
	\vint_{B_1}\big|(z\cdot h)^0_+-(w\cdot h)^0_+\big|\,dh
	=
	\frac{|B_1\cap(\{z\cdot h>0\}\triangle\{w\cdot h>0\})|}{|B_1|}
	\leq
	C |z-w|
\end{split}
\end{equation*}
for some explicit constant $C>0$ depending only on $N$, so (\ref{claim}) follows. Here $\triangle$ stands for the symmetric difference $A\triangle B=(A\setminus B)\cup(B\setminus A)$.

\textit{ii) Case $p>2$.} We observe that the function $t\mapsto(t)^{p-2}_+$ is continuous in $\R$. In addition, given any $|z|=|w|=1$, it holds that
\begin{equation*}
	|(z\cdot h)^{p-2}_+-(w\cdot h)^{p-2}_+|
	\leq 1
\end{equation*}
for each $h\in B_1$. Then (\ref{claim}) follows by the Dominated Convergence Theorem. 

\textit{iii) Case $1<p<2$.} This case requires a bit care since obtaining an integrable upper bound needed for the Dominated Convergence Theorem is not as straightforward.
To this end, we observe the inequality 
\begin{align*}
\abs{a-b}\le (a+b)\bigg(1-\frac{\min\{a,b\}}{\max\{a,b\}}\bigg).
\end{align*}
for every $a,b>0$. Thus
\begin{equation*}
	|(z\cdot h)^{p-2}_+-(w\cdot h)^{p-2}_+|
	\leq
	\big((z\cdot h)^{p-2}_++(w\cdot h)^{p-2}_+\big)\bigg(1-\frac{\min\{(z\cdot h)^{p-2}_+,(w\cdot h)^{p-2}_+\}}{\max\{(z\cdot h)^{p-2}_+,(w\cdot h)^{p-2}_+\}}\bigg).
\end{equation*}
In that way, applying H\"older inequality with $q=\frac{1}{2}\,\frac{p-3}{p-2}$ and recalling the definition of $\gamma_{N,\frac{p+1}{2}}$ we can estimate
\begin{multline*}
	\vint_{B_1}|(z\cdot h)^{p-2}_+-(w\cdot h)^{p-2}_+|\,dh
	\\
	\leq
	2\gamma_{N,\frac{p+1}{2}}^{2\frac{p-2}{p-3}}\bigg(\vint_{B_1}\bigg(1-\frac{\min\{(z\cdot h)^{p-2}_+,(w\cdot h)^{p-2}_+\}}{\max\{(z\cdot h)^{p-2}_+,(w\cdot h)^{p-2}_+\}}\bigg)^{-\frac{p-3}{p-1}}\,dh\bigg)^{-\frac{p-1}{p-3}}.
\end{multline*} 
Observe that $\tfrac{p+1}{2}>1$ and thus $\gamma_{N,\frac{p+1}{2}}<\infty$. Now, since $t\mapsto(t)^{p-2}_+$ is continuous in $(0,+\infty)$ (and we set other values identically to 0 in (\ref{eq:plusfunction})), and so, for any given $|z|=1$ and for each $h\in B_1$ such that $z\cdot h>0$, it holds that $(w\cdot h)^{p-2}_+\to(z\cdot h)^{p-2}_+$ as $w\to z$ with $|w|=1$. Then, we see that the function in the integral on the right hand side is bounded between $0$ and $1$ and converges to $0$ as $w\to z$ for each $h\in B_1$, so the assumptions in the Dominated Convergence Theorem are fulfilled, yielding that the right hand side above converges to $0$ as $w\to z$, and thus (\ref{claim}) follows.
\end{proof}

\begin{lemma}
\label{lem:continuous}
Let $\Omega\subset\R^N$. For $|z|=1$, the function 
\begin{equation*}
	x\longmapsto \mathcal{I}^z_\eps u(x)
\end{equation*}
is continuous in $\overline\Omega$ for every Borel measurable bounded function $u:\Omega_\eps\to\R$. Moreover, $\{\mathcal{I}^z_\eps u\,:\,|z|=1\}$ is equicontinuous in $\overline\Omega$.
\end{lemma}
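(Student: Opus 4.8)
The plan is to recognize $\mathcal{I}^z_\eps u$ as an average against an $L^1$ kernel that depends on $z$, and then to reduce the statement to the continuity of translations in $L^1(\R^N)$, the uniformity in $|z|=1$ being supplied by the compactness of the unit sphere together with the limit (\ref{claim}) established in Lemma~\ref{lem:continuous-wrtz}. Since the claimed equicontinuity of $\{\mathcal I^z_\eps u:|z|=1\}$ contains the continuity of each individual map $x\mapsto\mathcal I^z_\eps u(x)$, it suffices to produce a modulus of continuity in $x$ that is independent of $z$ (and of $u$ up to the factor $\norm{u}_\infty$).

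\emph{Step 1: a convolution form.} For $|z|=1$ set $K_z(h):=(z\cdot h)^{p-2}_+\,\1_{B_1}(h)$. By (\ref{gamma-ctt}) one has $K_z\in L^1(\R^N)$ with $\norm{K_z}_{L^1(\R^N)}=\gamma_{N,p}\abs{B_1}$, a finite quantity independent of $|z|=1$, and
\[
	\mathcal{I}^z_\eps u(x)
	=
	\frac{1}{\gamma_{N,p}\abs{B_1}}\int_{\R^N}u(x+\eps h)\,K_z(h)\,dh
\]
for $x\in\overline\Omega$, where we may extend $u$ by $0$ outside $\Omega_\eps$ without changing $\mathcal I^z_\eps u$ on $\overline\Omega$, since $x+\eps h\in\Omega_\eps$ whenever $x\in\overline\Omega$ and $h\in B_1$. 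For $x,y\in\overline\Omega$ the change of variables $h\mapsto h+\tfrac{x-y}{\eps}$ in the integral for $\mathcal I^z_\eps u(y)$ gives, with $a:=\tfrac{x-y}{\eps}$,
\[
	\mathcal{I}^z_\eps u(x)-\mathcal{I}^z_\eps u(y)
	=
	\frac{1}{\gamma_{N,p}\abs{B_1}}\int_{\R^N}u(x+\eps h)\big(K_z(h)-K_z(h+a)\big)\,dh ,
\]
whence
\[
	\abs{\mathcal{I}^z_\eps u(x)-\mathcal{I}^z_\eps u(y)}
	\le
	\frac{\norm{u}_\infty}{\gamma_{N,p}\abs{B_1}}\,\norm{K_z(\cdot+a)-K_z}_{L^1(\R^N)} .
\]
Thus everything reduces to showing that $\norm{K_z(\cdot+a)-K_z}_{L^1(\R^N)}\to 0$ as $\abs{a}\to 0$, \emph{uniformly} in $|z|=1$: if for a given $\delta>0$ this holds once $\abs{a}<r$, then $\abs{x-y}<\eps r$ forces $\abs{\mathcal I^z_\eps u(x)-\mathcal I^z_\eps u(y)}\le\tfrac{\norm{u}_\infty}{\gamma_{N,p}\abs{B_1}}\,\delta$ for all $|z|=1$, which is the asserted equicontinuity.

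\emph{Step 2: uniform $L^1$-continuity of translation.} For a fixed $z$, $\norm{K_z(\cdot+a)-K_z}_{L^1(\R^N)}\to 0$ as $a\to0$ is the standard continuity of translation in $L^1(\R^N)$, and this already yields the continuity of $x\mapsto\mathcal I^z_\eps u(x)$. For the uniformity in $z$ I would use that $z\mapsto K_z$ is continuous from $\{|z|=1\}$ into $L^1(\R^N)$: by (\ref{claim}),
\[
	\norm{K_z-K_w}_{L^1(\R^N)}
	=
	\abs{B_1}\vint_{B_1}\abs{(z\cdot h)^{p-2}_+-(w\cdot h)^{p-2}_+}\,dh
	\longrightarrow 0
	\quad\text{as }\abs{z-w}\to0 .
\]
Since $\{|z|=1\}$ is compact, this map is uniformly continuous, so given $\delta>0$ there are $z_1,\dots,z_m$ on the sphere with $\min_i\norm{K_z-K_{z_i}}_{L^1(\R^N)}<\delta/4$ for every $|z|=1$. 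Choosing $r>0$ so small that $\norm{K_{z_i}(\cdot+a)-K_{z_i}}_{L^1(\R^N)}<\delta/2$ for every $\abs{a}<r$ and every $i$, the triangle inequality and the translation invariance of the $L^1$ norm give
\[
	\norm{K_z(\cdot+a)-K_z}_{L^1(\R^N)}
	\le
	2\norm{K_z-K_{z_i}}_{L^1(\R^N)}+\norm{K_{z_i}(\cdot+a)-K_{z_i}}_{L^1(\R^N)}
	<\delta
\]
for all $|z|=1$ and $\abs{a}<r$, with $i$ realizing the minimum. Equivalently, one may say that $\{K_z:|z|=1\}$ is a compact subset of $L^1(\R^N)$, on which translations act equicontinuously.

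\emph{Main obstacle.} The only point requiring genuine care is the uniformity in $z$ of the translation modulus in Step~2; the change of variables and the $L^1$-continuity of translation for a single kernel are routine, and it is precisely in the $z$-uniformity that the compactness of the sphere and the previously proved limit (\ref{claim}) enter.
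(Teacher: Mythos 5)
Your proof is correct, and while it shares the paper's first step, it diverges at the decisive point. Like the paper, you rewrite $\mathcal I^z_\eps u(x)-\mathcal I^z_\eps u(y)$ by shifting the increment onto the kernel $K_z(h)=(z\cdot h)^{p-2}_+\1_{B_1}(h)$, reducing everything to smallness of $\norm{K_z(\cdot+a)-K_z}_{L^1}$ for small $a$, uniformly in $|z|=1$. The paper then proves this $L^1$ convergence by hand: it normalizes $z=e_1$ by a rotation (which is how it gets uniformity in $z$) and splits into the cases $p\geq 2$ (dominated convergence with the trivial bound) and $1<p<2$, where it builds an integrable dominant through the $\min/\max$ inequality and a H\"older-type estimate, mirroring Lemma~\ref{lem:continuous-wrtz}. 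You instead invoke the classical continuity of translations in $L^1(\R^N)$ for each fixed kernel, and obtain the $z$-uniform modulus from the continuity of $z\mapsto K_z$ into $L^1$ (which is exactly the limit (\ref{claim}) of Lemma~\ref{lem:continuous-wrtz}), compactness of the unit sphere, a finite net, and translation invariance of the $L^1$ norm. What your route buys is that the delicate part of the paper's argument --- finding a dominant in the singular range $1<p<2$ --- disappears entirely, being absorbed into a standard density fact, and Lemma~\ref{lem:continuous-wrtz} is genuinely reused rather than its technique repeated; what the paper's route buys is a self-contained, kernel-specific argument that does not pass through an abstract compactness/net step. All the individual steps in your write-up check out: the zero extension of $u$ is harmless since $x+\eps h\in\Omega_\eps$ for $x\in\overline\Omega$, $h\in B_1$; the change of variables and the bound by $\norm{u}_\infty\norm{K_z(\cdot+a)-K_z}_{L^1}/(\gamma_{N,p}\abs{B_1})$ are correct; and the $2\cdot\delta/4+\delta/2$ bookkeeping closes the uniformity argument.
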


\begin{proof}
Let $u:\Omega_\eps\to\R$ be a bounded Borel measurable function defined in $\Omega_\eps$. Our aim is to show that
\begin{equation*}
	\lim_{x,y\in\overline\Omega \text{ s.t. } |x-y|\to 0}\left|\mathcal{I}^z_\eps u(x)-\mathcal{I}^z_\eps u(y)\right|
	=
	0.
\end{equation*}
We can write
\begin{align*}
	&
	\mathcal{I}^z_\eps u(x)
	=
	\frac{1}{\gamma_{N,p}\abs{B_1}}\int_{\R^N}u(x+\eps h)\1_{B_1}(h)(z\cdot h)^{p-2}_+\,dh
	,
	\\
	&
	\mathcal{I}^z_\eps u(y)
	=
	\frac{1}{\gamma_{N,p}\abs{B_1}}\int_{\R^N}u(x+\eps h)\1_{B_1(-\frac{x-y}{\eps})}(h)(z\cdot(h+\tfrac{x-y}{\eps}))^{p-2}_+\,dh,
\end{align*}
so the following estimate follows immediately,
\begin{multline*}
	\left|\mathcal{I}^z_\eps u(x)-\mathcal{I}^z_\eps u(y)\right|
	\\
	\leq
	\frac{\norm{u}_{\infty}}{\gamma_{N,p}\abs{B_1}}\int_{\R^N}\big|\1_{B_1}(h)(z\cdot h)^{p-2}_+-\1_{B_1(-\frac{x-y}{\eps})}(h)(z\cdot(h+\tfrac{x-y}{\eps}))^{p-2}_+\big|\,dh.
\end{multline*}
We focus on the integral above. We can assume without loss of generality that $z=e_1$, otherwise we could perform a change of variables. In addition, and for the sake of simplicity, we denote $\xi=-\frac{x-y}{\eps}$. Then the result follows from the following claim,
\begin{equation}\label{claim2}
	\lim_{\xi\to 0}\int_{\R^N}\big|\1_{B_1}(h)(h_1)^{p-2}_+-\1_{B_1(\xi)}(h)(h_1-\xi_1)^{p-2}_+\big|\,dh
	=
	0.
\end{equation}
To see this we need to distinguish two cases depending on the value of $p$.

\textit{i) Case $p\geq2$.} The integrand in (\ref{claim2}) converges to zero as $\xi\to 0$ for almost every $h\in\R^N$. Moreover, it is bounded by $2$ and zero outside a bounded set. Then the claim follows by the Dominated Convergence Theorem as $\xi\to 0$.

\textit{ii) Case $1<p<2$.} In order to apply the Dominated Convergence Theorem when $1<p<2$, we observe similarly as in the proof of Lemma~\ref{lem:continuous-wrtz} that 
\begin{multline*}
	\big|\1_{B_1}(h)(h_1)^{p-2}_+-\1_{B_1(\xi)}(h)(h_1-\xi_1)^{p-2}_+\big|
	\\
\begin{split}
	\leq
	~&
	\big(\1_{B_1}(h)(h_1)^{p-2}_++\1_{B_1(\xi)}(h)(h_1-\xi_1)^{p-2}_+\big)
	\\
	~&
	\cdot\bigg(1-\frac{\min\{\1_{B_1}(h)(h_1)^{p-2}_+,\1_{B_1(\xi)}(h)(h_1-\xi_1)^{p-2}_+\}}{\max\{\1_{B_1}(h)(h_1)^{p-2}_+,\1_{B_1(\xi)}(h)(h_1-\xi_1)^{p-2}_+\}}\bigg).
\end{split}
\end{multline*}
In that way, applying H\"older inequality with $q=\frac{1}{2}\,\frac{p-3}{p-2}$,
\begin{multline*}
	\int_{D_0}\big|\1_{B_1}(h)(h_1)^{p-2}_+-\1_{B_1(\xi)}(h)(h_1-\xi_1)^{p-2}_+\big|\,dh
	\\
	\leq
	C\bigg(\int_{\R^N}\bigg(1-\frac{\min\{\1_{B_1}(h)(h_1)^{p-2}_+,\1_{B_1(\xi)}(h)(h_1-\xi_1)^{p-2}_+\}}{\max\{\1_{B_1}(h)(h_1)^{p-2}_+,\1_{B_1(\xi)}(h)(h_1-\xi_1)^{p-2}_+\}}\bigg)^{-\frac{p-3}{p-1}}\,dh\bigg)^{-\frac{p-1}{p-3}}
\end{multline*}
for every small enough $\xi$ where $C>0$ only depends on $N$ and $p$. Now again, the right hand side in the integral above is bounded between $0$ and $1$. Moreover, the integrand converges to $0$ as $\xi\to 0$ for almost every $h\in\R^N$, so that the Dominated Convergence Theorem implies (\ref{claim2}). Moreover, since the estimates obtained in this proof hold uniformly for every $x\in\overline\Omega$ and $|z|=1$, this implies the uniform equicontinuity of the family $x\mapsto\mathcal{I}^z_\eps u(x)$ with respect to $|z|=1$.
\end{proof}

As a direct consequence of the continuity estimate from the previous Lemma we get the following result.

\begin{lemma}\label{lem:sup-inf-cont}
Let $\Omega\subset\R^N$ and $u:\Omega_\eps\to\R$ be a Borel measurable bounded function. Then the function
\begin{equation*}
	x
	\longmapsto
	\frac{1}{2}\Big(\sup_{|z|=1}\mathcal{I}^z_\eps u(x)+\inf_{|z|=1}\mathcal{I}^z_\eps u(x)\Big)
\end{equation*}
is continuous in $\overline\Omega$.
\end{lemma}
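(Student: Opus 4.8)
The plan is to reduce the claim to Lemma~\ref{lem:continuous} by showing that taking a supremum or infimum over $|z|=1$ preserves equicontinuity, and then to use that a sum of two continuous functions is continuous. Concretely, fix $u:\Omega_\eps\to\R$ Borel measurable and bounded. By Lemma~\ref{lem:continuous} the family $\{\mathcal{I}^z_\eps u : |z|=1\}$ is (uniformly) equicontinuous on $\overline\Omega$: for every $\eta>0$ there is $\delta>0$, independent of $z$, such that $|\mathcal{I}^z_\eps u(x)-\mathcal{I}^z_\eps u(y)|<\eta$ whenever $x,y\in\overline\Omega$ with $|x-y|<\delta$.

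The key step is the elementary observation that if a family $\{f_z\}_{|z|=1}$ of real-valued functions satisfies $|f_z(x)-f_z(y)|<\eta$ for all $z$, then the same bound holds for $\sup_z f_z$ and $\inf_z f_z$ (assuming these are finite, which here follows from the stability property \textit{i)} in Remark~\ref{rem-average}, giving $|\mathcal{I}^z_\eps u(x)|\le\|u\|_\infty$ uniformly in $z$ and $x$). Indeed, for each $z$ one has $f_z(x)\le f_z(y)+\eta\le \sup_w f_w(y)+\eta$, and taking the supremum over $z$ gives $\sup_z f_z(x)\le \sup_w f_w(y)+\eta$; swapping the roles of $x$ and $y$ yields $|\sup_z f_z(x)-\sup_z f_z(y)|\le\eta$, and the argument for the infimum is symmetric. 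Applying this with $f_z=\mathcal{I}^z_\eps u$ shows that both $x\mapsto\sup_{|z|=1}\mathcal{I}^z_\eps u(x)$ and $x\mapsto\inf_{|z|=1}\mathcal{I}^z_\eps u(x)$ are (uniformly) continuous on $\overline\Omega$.

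Finally, the function in the statement is $\tfrac12$ times the sum of these two uniformly continuous functions, hence uniformly continuous on $\overline\Omega$, which in particular gives continuity.

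There is no real obstacle here: the substance is entirely contained in Lemma~\ref{lem:continuous}, whose proof handles the delicate $1<p<2$ case via the H\"older-inequality trick and dominated convergence. The present lemma is a soft consequence, the only points worth stating being that the $\sup$ and $\inf$ are finite (so the operations make sense) and that equicontinuity is stable under $\sup$/$\inf$ over an arbitrary index set.
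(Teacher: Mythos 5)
Your proposal is correct and matches the paper's argument: both deduce the result from the equicontinuity of $\{\mathcal{I}^z_\eps u : |z|=1\}$ in Lemma~\ref{lem:continuous} together with the elementary fact that taking $\sup$ or $\inf$ over $|z|=1$ preserves the common modulus of continuity (the paper phrases this via $\sup_z \mathcal{I}^z_\eps u(x)-\sup_z \mathcal{I}^z_\eps u(y)\le \sup_z\{\mathcal{I}^z_\eps u(x)-\mathcal{I}^z_\eps u(y)\}$ and the analogous inequality for the infimum). Your additional remark on finiteness of the $\sup$ and $\inf$ via the stability property is a harmless bit of extra care.
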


\begin{proof}
The result follows directly from the equicontinuity in $\overline\Omega$ of the set of functions $\{\mathcal{I}^z_\eps u\,:\,|z|=1\}$ (Lemma~\ref{lem:continuous}) and the elementary inequalities
\begin{align*}
	\sup_{|z|=1}\mathcal{I}^z_\eps u(x)-\sup_{|z|=1}\mathcal{I}^z_\eps u(y)
	\leq
	~&
	\sup_{|z|=1}\big\{\mathcal{I}^z_\eps u(x)-\mathcal{I}^z_\eps u(y)\big\},
	\\
	\inf_{|z|=1}\mathcal{I}^z_\eps u(x)-\inf_{|z|=1}\mathcal{I}^z_\eps u(y)
	\leq
	~&
	\sup_{|z|=1}\big\{\mathcal{I}^z_\eps u(x)-\mathcal{I}^z_\eps u(y)\big\}.\qedhere
\end{align*}
\end{proof}

\subsection{Existence and uniqueness}
\label{sec:exist}

Next we show existence of Borel measurable solutions to the  DPP (\ref{eq:dpp}). We also establish a comparison principle and thus uniqueness of solutions.

We remark that, contrary to the existence proofs in \cite{hartikainen16,arroyohp17}, no boundary correction is needed as in those references, since Lemma~\ref{lem:continuous} guarantees that $u-\eps^2f$ is continuous in $\Omega$, and the solutions to (\ref{eq:dpp}) are measurable.
Also recall that measurability of operators containing $\sup$ and $\inf$ is not completely trivial as shown by Example 2.4 in \cite{luirops14}.

The proof of existence of solutions to the DPP (\ref{eq:dpp}) with prescribed values in $$\Gamma_\eps=\Omega_\eps\setminus\Omega$$ is based on Perron's method. For that, given Borel measurable bounded functions $f:\Omega\to\R$ and $g:\Gamma_\eps\to\R$, we consider the family $\S_{f,g}$ of Borel measurable functions $u:\Omega_\eps\to\R$ such that $u-\eps^2f$ is continuous in $\Omega$ and
\begin{equation}\label{sub-DPP}
	\begin{cases}
	\displaystyle
	u
	\leq
	\frac{1}{2}\Big(\sup_{|z|=1}\mathcal{I}^z_\eps u+\inf_{|z|=1}\mathcal{I}^z_\eps u\Big)+\eps^2f
	& \text{ in } \Omega,
	\\
	u
	\leq
	g
	& \text{ in }\Gamma_\eps.
	\end{cases}
\end{equation}
In the PDE literature, the corresponding class would be the class of subsolutions with suitable boundary conditions. In the following lemmas we prove that $\S_{f,g}$ is non-empty and uniformly bounded.

\begin{lemma}
\label{lem:non-empty}
Let $f$ and $g$ be Borel measurable bounded functions in $\Omega$ and $\Gamma_\eps$, respectively. There exists a Borel measurable function $u:\Omega_\eps\to\R$ such that $u-\eps^2f$ is continuous in $\Omega$ and $u$ satisfies (\ref{sub-DPP}) with $u=g$ in $\Gamma_\eps$.
\end{lemma}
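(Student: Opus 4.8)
The plan is to exhibit an explicit member of $\S_{f,g}$ that equals $g$ on $\Gamma_\eps$, of the form ``convex quadratic barrier $+$ the forced term $\eps^2 f$ $+$ a large negative constant''. A pure constant cannot work: the requirement that $u-\eps^2 f$ be continuous in $\Omega$ forces $u$ to equal $\eps^2 f$ plus something continuous, while $u=\eps^2 f+C$ by itself fails the subsolution inequality in general, since $\mathcal I^z_\eps(\eps^2 f)(x)$ may drop below $\eps^2 f(x)$ by as much as $\eps^2$ times the oscillation of $f$. The convex barrier is there to supply precisely that much slack.

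Concretely, I would set $u:=\eps^2 f+L\abs{\cdot}^2+C$ in $\Omega$ and $u:=g$ in $\Gamma_\eps$, with constants $L\geq0$ and $C\in\R$ to be fixed. Since $f$ and $g$ are bounded and Borel measurable and $\Omega$ is open, $u$ is bounded and Borel measurable on $\Omega_\eps$, the function $u-\eps^2 f=L\abs{\cdot}^2+C$ is continuous in $\Omega$, and $u=g$ (in particular $u\leq g$) in $\Gamma_\eps$; so every requirement of the statement except the first inequality in (\ref{sub-DPP}) holds for free. For that inequality the only computation needed is the action of $\mathcal I^z_\eps$ on a quadratic: expanding $\abs{x+\eps h}^2$, and using that $\vint_{B_1}(z\cdot h)^{p-2}_+\,dh=\gamma_{N,p}$, that $\vint_{B_1}h\,(z\cdot h)^{p-2}_+\,dh$ is a scalar multiple of $z$ (by rotational symmetry of $B_1$), and that $\mu:=\vint_{B_1}\abs{h}^2(z\cdot h)^{p-2}_+\,dh$ is finite, with $\mu\leq\gamma_{N,p}$, and independent of $\abs{z}=1$, one obtains
\[
	\mathcal I^z_\eps\big(L\abs{\cdot}^2+c\big)(x)
	=
	L\abs{x}^2+a_z(x)+\frac{L\eps^2\mu}{\gamma_{N,p}}+c
	\qquad(c\in\R),
\]
where $a_z(x)$ is affine in $x\cdot z$. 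Since $\sup_{\abs z=1}(x\cdot z)+\inf_{\abs z=1}(x\cdot z)=0$, the term $a_z(x)$ disappears under the operation $\tfrac12(\sup_{\abs z=1}+\inf_{\abs z=1})$.

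The one genuine obstacle is that for $x\in\Omega$ with $\dist(x,\partial\Omega)<\eps$ the operator $\mathcal I^z_\eps u(x)$ sees the boundary data $g$ rather than the quadratic, so the computation above does not apply to $u$ directly. To get around this I would compare $u$ with the global quadratic minorant $\bar u(y):=L\abs{y}^2+\widetilde C$ on $\R^N$, where $\widetilde C:=C+\eps^2\inf_\Omega f$. On $\Omega$ one has $u\geq\eps^2\inf_\Omega f+L\abs{\cdot}^2+C=\bar u$, and on $\Gamma_\eps$, since $\bar u\leq L d^2+\widetilde C$ with $d:=\sup_{y\in\Omega_\eps}\abs{y}<\infty$, the choice $C\leq-\norm{g}_\infty-Ld^2-\eps^2\inf_\Omega f$ gives $\bar u\leq-\norm{g}_\infty\leq g=u$; hence $u\geq\bar u$ throughout $\Omega_\eps$. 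By the monotonicity of $\mathcal I^z_\eps$ (Remark~\ref{rem-average}) we then have $\mathcal I^z_\eps u(x)\geq\mathcal I^z_\eps\bar u(x)$ for every $\abs z=1$, so by the previous paragraph
\[
	\tfrac12\Big(\sup_{\abs z=1}\mathcal I^z_\eps u(x)+\inf_{\abs z=1}\mathcal I^z_\eps u(x)\Big)
	\geq
	L\abs{x}^2+\frac{L\eps^2\mu}{\gamma_{N,p}}+C+\eps^2\inf_\Omega f .
\]
Since $u(x)=\eps^2 f(x)+L\abs{x}^2+C$, the first inequality in (\ref{sub-DPP}) at $x$ reduces to $\tfrac{L\eps^2\mu}{\gamma_{N,p}}+\eps^2\inf_\Omega f\geq0$, i.e.\ to $L\geq-\tfrac{\gamma_{N,p}}{\mu}\inf_\Omega f$. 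Fixing $L:=\max\{0,\,-\tfrac{\gamma_{N,p}}{\mu}\inf_\Omega f\}$ and then $C$ as above completes the argument; the only ingredients beyond bookkeeping are $p>1$ (ensuring $\gamma_{N,p},\mu<\infty$) and the stability and monotonicity of $\mathcal I^z_\eps$ from Remark~\ref{rem-average}.
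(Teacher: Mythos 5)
Your proposal is correct and takes essentially the same route as the paper's proof: an explicit quadratic barrier plus $\eps^2 f$ in $\Omega$ glued to $g$ on $\Gamma_\eps$, compared via monotonicity with a global quadratic minorant, with the first-order term removed by symmetry under $\tfrac12(\sup_{|z|=1}+\inf_{|z|=1})$ and the second-moment gain of order $\eps^2$ absorbing the contribution of $f$. The only cosmetic differences are that the paper pairs $z$ with $-z$ and invokes the exact value $\frac{N+p-2}{N+p}$ from the appendix (choosing a single constant $C$ depending on $\|f\|_\infty$ and $\|g\|_\infty$), whereas you split the roles into $L$ and $C$ and only need $0<\mu\leq\gamma_{N,p}$.
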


\begin{proof}
Let  $C>0$ be a constant to be fixed later, fix $R=\displaystyle\sup_{x\in\Omega_\eps}|x|$ and define
\begin{equation*}
	u(x)
	=
	\begin{cases}
	C(|x|^2-R^2)+\eps^2f(x) & \text{ if } x\in\Omega,
	\\
	g(x) & \text{ if } x\in\Gamma_\eps.
	\end{cases}
\end{equation*}
Then $u-\eps^2f$ is clearly continuous in $\Omega$. To see that $u$ satisfies (\ref{sub-DPP}), let
\begin{equation*}
	u_0(x)
	=
	C(|x|^2-R^2)-\eps^2\|f\|_\infty-\|g\|_\infty
\end{equation*}
for every $x\in\Omega_\eps$. Then $u_0\leq u$ in $\Omega_\eps$. By the linearity and the monotonicity of the operator $\mathcal{I}^z_\eps$ (see Remark~\ref{rem-average}),
\begin{equation*}
\begin{split}
	\frac{1}{2}\Big(\sup_{|z|=1}\mathcal{I}^z_\eps u(x)+\inf_{|z|=1}\mathcal{I}^z_\eps u(x)\Big)
	\geq
	~&
	\frac{1}{2}\Big(\sup_{|z|=1}\mathcal{I}^z_\eps u_0(x)+\inf_{|z|=1}\mathcal{I}^z_\eps u_0(x)\Big)
	\\
	\geq
	~&
	\frac{1}{2}\inf_{|z|=1}\big\{\mathcal{I}^z_\eps u_0(x)+\mathcal{I}^{-z}_\eps u_0(x)\big\}
	\\
	=
	~&
	C\inf_{|z|=1}\bigg\{\frac{1}{2\gamma_{N,p}}\vint_{B_1}|x+\eps h|^2|z\cdot h|^{p-2}\,dh\bigg\}
	\\
	~&
	-CR^2-\eps^2\|f\|_\infty-\|g\|_\infty.
\end{split}
\end{equation*}
By the symmetry properties and the identity (\ref{integral-p3}), it turns out that
\begin{equation*}
\begin{split}
	\frac{1}{2\gamma_{N,p}}\vint_{B_1}|x+\eps h|^2|z\cdot h|^{p-2}\,dh
	=
	|x|^2+\eps^2\,\frac{N+p-2}{N+p}
	\geq
	|x|^2+\frac{\eps^2}{3}
\end{split}
\end{equation*}
holds for any $|z|=1$, $N\geq 2$ and $p>1$. Therefore
\begin{equation*}
\begin{split}
	\frac{1}{2}\Big(\sup_{|z|=1}\mathcal{I}^z_\eps u(x)+\inf_{|z|=1}\mathcal{I}^z_\eps u(x)\Big)
	\geq
	~&
	C\Big(|x|^2+\frac{\eps^2}{3}\Big)-CR^2-\eps^2\|f\|_\infty-\|g\|_\infty
	\\
	=
	~&
	u(x)-\eps^2f(x)+\Big(\frac{C\eps^2}{3}-\eps^2\|f\|_\infty-\|g\|_\infty\Big).
\end{split}
\end{equation*}
Then (\ref{sub-DPP}) follows for $C=3(\|f\|_\infty+\eps^{-2}\|g\|_\infty)$ since then the expression in the parenthesis right above equals zero.
\end{proof}

\begin{lemma}
\label{lem:boundedness}
Let $f$ be a Borel measurable bounded function in $\Omega$. If $u:\Omega_\eps\to\R$ is a Borel measurable function satisfying (\ref{sub-DPP}) in $\Omega$ then
\begin{equation*}
	\sup_\Omega u
	\leq
	C\eps^2\|f\|_\infty+\|g\|_\infty
\end{equation*}
for some constant $C>0$ depending only on $\Omega$ and $\eps$.
\end{lemma}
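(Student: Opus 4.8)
The plan is to exploit the sub-DPP inequality in the first line of (\ref{sub-DPP}) together with the stability property \emph{i)} of Remark~\ref{rem-average}, which gives $\mathcal{I}^z_\eps u(x)\le\sup_{B_\eps(x)}u$ for every $|z|=1$. Combining these, any $u$ satisfying (\ref{sub-DPP}) obeys
\begin{equation*}
	u(x)\le\sup_{B_\eps(x)\cap\Omega_\eps}u+\eps^2\|f\|_\infty\qquad\text{for }x\in\Omega,
\end{equation*}
and $u\le\|g\|_\infty$ on $\Gamma_\eps=\Omega_\eps\setminus\Omega$. So the mechanism is a discrete maximum-principle / propagation argument: the value at $x$ is controlled, up to an additive error $\eps^2\|f\|_\infty$, by the supremum of $u$ on an $\eps$-ball around $x$, and after finitely many such steps one reaches the boundary strip $\Gamma_\eps$ where $u\le\|g\|_\infty$.

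First I would set $M=\sup_{\Omega_\eps}u$ (finite, say, because $u-\eps^2f$ is continuous on the compact $\overline\Omega$ and $u=g$ on $\Gamma_\eps$, or alternatively one argues with $M_k=\sup$ over successively smaller sets). Then I would define, for $k\ge0$, the ``inner layers''
\begin{equation*}
	\Omega^{(k)}=\{x\in\Omega:\dist(x,\Gamma_\eps)>k\eps\},
\end{equation*}
so that $\Omega^{(0)}=\Omega$ and, since $\Omega$ is bounded, $\Omega^{(k)}=\emptyset$ for all $k\ge k_0$ where $k_0=k_0(\Omega,\eps)$ is the smallest integer with $k_0\eps\ge\operatorname{diam}(\Omega_\eps)$ (any crude bound suffices). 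For $x\in\Omega^{(k)}\setminus\Omega^{(k+1)}$ — really for any $x$ — the ball $B_\eps(x)$ meets $\Omega_\eps\setminus\Omega^{(k+1)}$... more usefully: I would prove by downward/forward induction on $k$ the estimate
\begin{equation*}
	\sup_{\Omega\setminus\Omega^{(k)}}u\ \le\ \|g\|_\infty+k\,\eps^2\|f\|_\infty .
\end{equation*}
The base case $k=0$ is just $u\le g\le\|g\|_\infty$ on $\Gamma_\eps$. For the inductive step, take $x\in\Omega^{(k-1)}\setminus\Omega^{(k)}$: then $B_\eps(x)$ is contained in $\Omega_\eps\setminus\Omega^{(k-1)}$ (by the definition of the layers, a point at distance $\le k\eps$ from $\Gamma_\eps$ has its $\eps$-ball within distance $(k-1)\eps$... adjusting indices as needed), so $\sup_{B_\eps(x)}u\le\|g\|_\infty+(k-1)\eps^2\|f\|_\infty$ by the induction hypothesis, and then the stability estimate above yields $u(x)\le\|g\|_\infty+k\eps^2\|f\|_\infty$. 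Taking $k=k_0$ gives $\sup_\Omega u\le\|g\|_\infty+k_0\eps^2\|f\|_\infty$, which is the claim with $C=k_0=C(\Omega,\eps)$.

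The main obstacle is purely bookkeeping: getting the layer indices to line up so that ``$x$ at distance $\le k\eps$ from the boundary strip'' really does force $B_\eps(x)$ into the union of strictly shallower layers, and checking that the induction closes with exactly the stated additive error $k\eps^2\|f\|_\infty$. One must also make sure $\sup_\Omega u$ is a priori finite before running the induction (so that the inequalities manipulate genuine real numbers and not $+\infty$); this is where one invokes that $u-\eps^2f$ is continuous in $\Omega$ together with boundedness of $f$ and $g$, or else one phrases the whole argument with $\sup$ replaced by $\sup$ over the relevant finite layers and passes to the limit. No delicate analysis is needed — the singular exponent $p-2$ plays no role here since only the stability bound \emph{i)} is used — so once the geometry of the finitely many $\eps$-layers is set up correctly the estimate follows immediately, with the constant $C$ depending on $\Omega$ (through its diameter) and on $\eps$ (through the layer thickness), as stated.
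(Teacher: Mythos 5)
Your argument has a genuine gap, and it occurs right at the start. The inequality you extract from stability, namely $u(x)\le\sup_{B_\eps(x)\cap\Omega_\eps}u+\eps^2\|f\|_\infty$, is vacuous: since $x\in B_\eps(x)$, every bounded function whatsoever satisfies $u(x)\le\sup_{B_\eps(x)}u$, so this inequality retains no information from (\ref{sub-DPP}) and cannot by itself yield any maximum principle. The layer induction then fails for the corresponding geometric reason: for $x\in\Omega^{(k-1)}\setminus\Omega^{(k)}$ the ball $B_\eps(x)$ is \emph{not} contained in $\Omega_\eps\setminus\Omega^{(k-1)}$ — it reaches into strictly deeper layers (points of $B_\eps(x)$ can have distance up to $\dist(x,\Gamma_\eps)+\eps$ from $\Gamma_\eps$), so $\sup_{B_\eps(x)}u$ is not controlled by the induction hypothesis, and no reindexing ("adjusting indices as needed") can fix this, because the sup over a ball centered at $x$ always dominates values at points farther from the boundary than $x$. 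In short, bounding a value by the supremum over a neighborhood of the same point propagates nothing; one needs a \emph{quantitative} statement that a definite fraction of the averaging measure lands in a region where $u$ is already controlled.

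This is exactly what the paper's proof supplies, and it uses the structure of (\ref{sub-DPP}) that your reduction discards. The paper pairs the maximizing direction $z_0$ with $-z_0$, so the two terms $\tfrac12(\mathcal{I}^{z_0}_\eps u+\mathcal{I}^{-z_0}_\eps u)$ combine into an average against the full symmetric density $|z_0\cdot h|^{p-2}$; it then shows that the outward half-annulus $S_x=\{h\in B_1:\tfrac12\le|h|<1,\ x\cdot h\ge0\}$ carries weight $\vartheta=\tfrac12-2^{-(N+p-1)}\in(\tfrac14,\tfrac12)$ of this measure, uniformly in $x$ and $z_0$. On $S_x$ one has $|x+\eps h|^2\ge|x|^2+\eps^2/4$, so setting $V_k=\R^N\setminus B_{\sqrt{k}\,\eps/2}$ (and extending $u$ by $\|g\|_\infty$ outside $\Omega_\eps$) gives $\sup_{V_k}u\le\vartheta\sup_{V_{k+1}}u+(1-\vartheta)\sup_{\R^N}u+\eps^2\|f\|_\infty$, which \emph{does} close under iteration because the sets $V_k$ expand outward toward the complement of $\Omega$, reached after $k_0\sim(\operatorname{diam}\Omega/\eps)^2$ steps. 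Note also that the resulting constant is $2^{2k+1}$, i.e.\ exponential in the number of steps, not the linear-in-$k$ error your sketch predicts; the remainder term $(1-\vartheta)\sup_{\R^N}u$ forces this loss and is another sign that a purely additive layer-by-layer bookkeeping cannot reproduce the estimate.
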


\begin{proof}
We start by extending the function $u$ as $\|g\|_\infty$ outside $\Omega_\eps$.
For each $x\in\Omega$ let
\begin{equation*}
	S_x
	=
	\big\{h\in B_1\,:\, \tfrac{1}{2}\leq|h|<1\ \text{ and }\ x\cdot h\geq 0\big\}.
\end{equation*}
Then we define the constant
\begin{equation*}
	\vartheta
	=
	\vartheta(N,p)
	=
	\frac{1}{2\gamma_{N,p}|B_1|}\int_{S_x}|z\cdot h|^{p-2}\,dh,
\end{equation*}
which is independent of $x\in\Omega$ and $|z|=1$. Indeed, since
\begin{equation*}
	\int_{S_x\cap\{z\cdot h<0\}}|z\cdot h|^{p-2}\,dh
	=
	\int_{S_{-x}\cap\{z\cdot h>0\}}|z\cdot h|^{p-2}\,dh
\end{equation*}
we can write
\begin{align*}
	\int_{S_x}|z\cdot h|^{p-2}\,dh&=\int_{S_x\cap\{z\cdot h>0\}}|z\cdot h|^{p-2}\,dh
	+
	\int_{S_{x}\cap\{z\cdot h<0\}}|z\cdot h|^{p-2}\,dh
\\
&
=\int_{S_x\cap\{z\cdot h>0\}}|z\cdot h|^{p-2}\,dh
	+
	\int_{S_{-x}\cap\{z\cdot h>0\}}|z\cdot h|^{p-2}\,dh.
\end{align*}
Using this and the fact that $S_x\cup S_{-x}=B_1\setminus B_{1/2}$,  we get
\begin{equation*}
	\int_{S_x}|z\cdot h|^{p-2}\,dh
	=
	\int_{B_1\setminus B_{1/2}}(z\cdot h)^{p-2}_+\,dh
	=
	\gamma_{N,p}\abs{B_1}\Big(1-\frac{1}{2^{N+p-2}}\Big).
\end{equation*}
In the last equality we used the definition of $\gamma_{N,p}$ and a change of variables as 
\begin{align*}
\int_{B_1\setminus B_{1/2}}(z\cdot h)^{p-2}_+\,dh
&=\int_{B_1}(z\cdot h)^{p-2}_+\,dh-\int_{B_{1/2}}(z\cdot h)^{p-2}_+\,dh\\
&=\int_{B_1}(z\cdot h)^{p-2}_+\,dh-2^{-(N+p-2)}\int_{B_1}(z\cdot h)^{p-2}_+\,dh.
\end{align*}
Thus we obtain
\begin{equation*}
	\vartheta
	=
	\frac{1}{2}-\frac{1}{2^{N+p-1}}
	\in(\tfrac{1}{4},\tfrac{1}{2})
\end{equation*}
for any $N\geq 2$ and $1<p<\infty$.

Let $x\in\Omega$. By Lemma~\ref{lem:continuous-wrtz}, there exists $|z_0|=1$ maximizing $\mathcal{I}^z_\eps u(x)$ among all $|z|=1$. Then
\begin{equation*}
\begin{split}
	u(x)-\eps^2f(x)
	\leq
	~&
	\frac{1}{2}\Big(\sup_{|z|=1}\mathcal{I}^z_\eps u(x)+\inf_{|z|=1}\mathcal{I}^z_\eps u(x)\Big)
	\\
	\leq
	~&
	\frac{1}{2}\big(\mathcal{I}^{z_0}_\eps u(x)+\mathcal{I}^{-z_0}_\eps u(x)\big)
	\\
	=
	~&
	\frac{1}{2\gamma_{N,p}}\vint_{B_1}u(x+\eps h)|z_0\cdot h|^{p-2}\,dh
	\\
	\leq
	~&
	\vartheta\sup_{h\in B_1\cap S_x}\{u(x+\eps h)\}
	+(1-\vartheta)\sup_{\R^N} u.
\end{split}
\end{equation*}
For each $k\in\N$, let $V_k=\R^N\setminus B_{\sqrt{k}\,\eps/2}$. Since
\begin{equation*}
	|x+\eps h|^2
	\geq
	|x|^2+\frac{\eps^2}{4}
\end{equation*}
for every $h\in B_1\cap S_x$, it turns out that $x+\eps h\in V_{k+1}$ for every $h\in B_1\cap S_x$ and $x\in V_k$. Therefore
\begin{equation*}
	\sup_{V_k}u
	\leq
	\vartheta\sup_{V_{k+1}}u
	+(1-\vartheta)\sup_{\R^N}u+\eps^2\|f\|_\infty.
\end{equation*}
Iterating this inequality starting from $V_0=\R^N$ we obtain
\begin{equation*}
	\sup_{\R^N}u
	\leq
	\vartheta^k\sup_{V_k}u
	+\bigg(\sum_{j=0}^{k-1}\vartheta^j\bigg)\big((1-\vartheta)\sup_{\R^N}u+\eps^2\|f\|_\infty\big),
\end{equation*}
and rearranging terms
\begin{equation*}
	\sup_{\R^N}u
	\leq
	\sup_{V_k}u
	+\frac{1-\vartheta^k}{\vartheta^k(1-\vartheta)}\,\eps^2\|f\|_\infty.
\end{equation*}
Since $\Omega$ is bounded, choosing large enough $k_0=k_0(\eps,\Omega)$ we ensure that $\Omega\subset B_{\sqrt{k_0}\,\eps/2}$, and thus $V_{k_0}\subset\R^N\setminus\Omega$. Thus there is necessarily a step $k\leq k_0$ so that $\displaystyle\sup_{V_k}u\leq\sup_{\R^N\setminus\Omega}u\leq\|g\|_\infty$. Using also that $\vartheta\in(\frac{1}{4},\frac{1}{2})$, we get
\begin{equation*}
	\sup_\Omega u
	\leq
	\sup_{\R^N}u
	\leq
	\|g\|_\infty
	+2^{2k+1}\,\eps^2\|f\|_\infty.
	\qedhere
\end{equation*}
\end{proof}

Now we have the necessary lemmas to work out the existence through Perron's method. The idea is to take the pointwise supremum of functions in $\S_{f,g}$, the family of Borel measurable functions $u$ with $u-\eps^2f\in C(\Omega)$ satisfying (\ref{sub-DPP}) (this would be subsolutions in corresponding PDE context), and to show that this is the desired solution. Here we also utilize the continuity of $u-\eps^2f$ in $\Om$ so that the supremum of functions in the uncountable set $\S_{f,g}$ is measurable. Indeed, otherwise to the best of our knowledge, Perron's method does not work as such  (unless $p=\infty$ \cite{lius15}) but one needs to construct a countable sequence of functions as in \cite{luirops14} to guarantee the measurability. 

\begin{theorem}
\label{thm:existence}
Let  $f$ and $g$ be Borel measurable bounded functions in $\Omega$ and $\Gamma_\eps$, respectively. There exists $u:\Omega_\eps\to\R$ a Borel measurable solution satisfying
\begin{equation}\label{DPP-g}
\begin{cases}
	\displaystyle
	u
	=
	\frac{1}{2}\Big(\sup_{|z|=1}\mathcal{I}^z_\eps u+\inf_{|z|=1}\mathcal{I}^z_\eps u\Big)+\eps^2f & \text{ in } \Omega,
	\\[1em]
	\displaystyle
	u
	=
	g & \text{ in } \Gamma_\eps.
\end{cases}
\end{equation}
\end{theorem}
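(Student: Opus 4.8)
The plan is to carry out Perron's method in the standard way, using the lemmas just established. Define $u = \sup_{v \in \S_{f,g}} v$ pointwise on $\Om_\eps$. By Lemma~\ref{lem:non-empty} the class $\S_{f,g}$ is nonempty, and by Lemma~\ref{lem:boundedness} every $v \in \S_{f,g}$ satisfies $v - \eps^2 f \le C\eps^2\|f\|_\infty + \|g\|_\infty - \eps^2 f$ in $\Om$, so $u$ is finite and bounded. On $\Gamma_\eps$ we have $u = g$ (the function from Lemma~\ref{lem:non-empty} attains $g$ there, and no $v \in \S_{f,g}$ exceeds $g$ there). First I would record that $u - \eps^2 f$ is continuous in $\Om$: indeed $u - \eps^2 f$ is the pointwise supremum of the family $\{v - \eps^2 f : v \in \S_{f,g}\}$, each member of which is continuous in $\Om$, and moreover for any $v \in \S_{f,g}$ the function $x \mapsto \frac12(\sup_{|z|=1}\mathcal{I}^z_\eps v(x) + \inf_{|z|=1}\mathcal{I}^z_\eps v(x)) - \eps^2 f(x)$ (which by Lemma~\ref{lem:sup-inf-cont} is also continuous, using that $\mathcal{I}^z_\eps v$ only depends on $v$ on $\Om_\eps$ where $v$ is bounded and Borel) is an upper bound for $v - \eps^2 f$ that is itself the restriction of a continuous function; the supremum of a locally uniformly bounded family of continuous functions each dominated by a common continuous function need not be continuous in general, so this is exactly the point where one leans on the specific structure, and I discuss it below.

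Once $u$ is known to have $u - \eps^2 f$ continuous in $\Om$, the argument splits into the two classical halves. \emph{($u$ is a subsolution.)} For fixed $x \in \Om$ and any $v \in \S_{f,g}$, monotonicity of $\mathcal{I}^z_\eps$ (Remark~\ref{rem-average}) gives $v(x) \le \frac12(\sup_{|z|=1}\mathcal{I}^z_\eps v(x) + \inf_{|z|=1}\mathcal{I}^z_\eps v(x)) + \eps^2 f(x) \le \frac12(\sup_{|z|=1}\mathcal{I}^z_\eps u(x) + \inf_{|z|=1}\mathcal{I}^z_\eps u(x)) + \eps^2 f(x)$; taking the supremum over $v$ yields the subsolution inequality for $u$, and since $u$ itself is Borel measurable with $u - \eps^2 f$ continuous, $u \in \S_{f,g}$ — it is the maximal element. \emph{($u$ is a supersolution.)} Suppose for contradiction that at some $x_0 \in \Om$ one has strict inequality $u(x_0) < \frac12(\sup_{|z|=1}\mathcal{I}^z_\eps u(x_0) + \inf_{|z|=1}\mathcal{I}^z_\eps u(x_0)) + \eps^2 f(x_0)$. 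Then I would construct a competitor $\tilde u$ that agrees with $u$ outside a small ball $B_r(x_0) \subset \Om$ and is lifted slightly on $B_r(x_0)$: set $\tilde u = u + \theta \varphi$ where $\varphi$ is a suitable continuous bump supported in $B_r(x_0)$ with $\varphi(x_0) > 0$, and $\theta > 0$ small. Using the continuity of $x \mapsto \frac12(\sup_{|z|=1}\mathcal{I}^z_\eps u(x) + \inf_{|z|=1}\mathcal{I}^z_\eps u(x)) - \eps^2 f(x)$ near $x_0$ (Lemma~\ref{lem:sup-inf-cont}) together with the stability property $\inf_{B_\eps(\cdot)} \le \mathcal{I}^z_\eps \le \sup_{B_\eps(\cdot)}$ and linearity to control the effect of the perturbation $\theta\varphi$ (whose $\mathcal{I}^z_\eps$-image is bounded by $\theta\|\varphi\|_\infty$ uniformly in $z$), one checks that for $r$ and $\theta$ small enough $\tilde u$ still satisfies \eqref{sub-DPP} on all of $\Om$, still equals $g$ on $\Gamma_\eps$, and still has $\tilde u - \eps^2 f$ continuous; hence $\tilde u \in \S_{f,g}$, but $\tilde u(x_0) > u(x_0)$, contradicting maximality. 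Therefore $u$ is also a supersolution, so \eqref{DPP-g} holds.

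The main obstacle is the measurability/continuity bookkeeping that makes Perron's method legitimate here — precisely the issue flagged before the theorem, that the supremum over the \emph{uncountable} family $\S_{f,g}$ could a priori fail to be measurable. The resolution built into the setup is that membership in $\S_{f,g}$ forces $v - \eps^2 f \in C(\Om)$, and the supremum $u - \eps^2 f$ is then realized as a supremum of continuous functions that is itself continuous because of the subsolution inequality: at each $x$, $u(x) - \eps^2 f(x) \le \frac12(\sup_z \mathcal{I}^z_\eps u(x) + \inf_z \mathcal{I}^z_\eps u(x)) - \eps^2 f(x) =: w(x)$ with $w$ continuous by Lemma~\ref{lem:sup-inf-cont}, while $u - \eps^2 f \ge v - \eps^2 f$ for the specific continuous $v$ of Lemma~\ref{lem:non-empty}; upper semicontinuity of $u-\eps^2 f$ follows since $u - \eps^2 f \leq w$ with equality of lim sup forced at each point by the subsolution property, and lower semicontinuity since it is a sup of continuous functions — so $u - \eps^2 f$ is continuous in $\Om$, hence $u$ is Borel on $\Om_\eps$. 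I would also be careful that in the supersolution step the perturbed function $\tilde u$ retains measurability (immediate, $\varphi$ continuous) and that the perturbation does not violate the inequality on the ``annulus'' where $\varphi$ transitions to $0$, which is where the strictness margin at $x_0$ combined with continuity of $w$ on a neighborhood and the uniform-in-$z$ bound $|\mathcal{I}^z_\eps(\theta\varphi)| \le \theta\|\varphi\|_\infty$ is used. Everything else is routine given Remark~\ref{rem-average} and Lemmas~\ref{lem:continuous}–\ref{lem:sup-inf-cont}.
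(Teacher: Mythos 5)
There is a genuine gap, and it sits exactly where you flagged it: the upper semicontinuity of $u-\eps^2 f$ in $\Om$. Lower semicontinuity is indeed free (pointwise supremum of the continuous functions $v-\eps^2f$, $v\in\S_{f,g}$), but your argument for the other half does not work. The subsolution inequality only gives the one-sided bound $u-\eps^2 f\le \frac12\big(\sup_{|z|=1}\mathcal{I}^z_\eps u+\inf_{|z|=1}\mathcal{I}^z_\eps u\big)=:W$ with $W$ continuous by Lemma~\ref{lem:sup-inf-cont}; being dominated by a continuous function says nothing about upper semicontinuity, and the phrase ``equality of lim sup forced at each point by the subsolution property'' is precisely the statement you would need but have not proved --- before the supersolution property is established there is no reason to exclude $u(x)-\eps^2f(x)<W(x)$ at some points, which is exactly the situation in which usc can fail. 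The gap then propagates into your second half: the bump construction needs the slack $W-(u-\eps^2f)$ to stay bounded below by $\delta>0$ on a whole ball $B_r(x_0)$, and since $u-\eps^2f$ is a priori only lsc, the slack is only usc, so positivity at $x_0$ does not give positivity nearby; moreover $\tilde u=u+\theta\varphi$ is only a legitimate competitor in $\S_{f,g}$ if $u-\eps^2f$ is already known to be continuous. So both halves of your argument rest on the unproved continuity.

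The paper closes this gap with a one-step bootstrap that also makes the perturbation argument unnecessary: starting from the merely Borel measurable supremum $\overline u$ (measurable because $\overline u-\eps^2f$ is lsc), one defines $\widetilde u=W+\eps^2 f$ in $\Om$ and $\widetilde u=g$ on $\Gamma_\eps$. The crucial point is that Lemma~\ref{lem:sup-inf-cont} gives continuity of $W$ for \emph{any} bounded Borel function, so $\widetilde u-\eps^2f$ is continuous; the subsolution inequality gives $\overline u\le\widetilde u$, monotonicity of $\mathcal{I}^z_\eps$ then shows $\widetilde u$ itself satisfies (\ref{sub-DPP}), hence $\widetilde u\in\S_{f,g}$ and so $\widetilde u\le\overline u$ by maximality. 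The two inequalities force $\overline u=\widetilde u$, which simultaneously yields the continuity of $\overline u-\eps^2 f$ and the equality in (\ref{DPP-g}). If you want to keep your structure, you must either reproduce this bootstrap (after which your bump argument becomes redundant) or find some other genuine proof of the usc step; as written, the continuity claim is asserted rather than proved.
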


\begin{proof}
In view of Lemmas~\ref{lem:non-empty} and \ref{lem:boundedness}, the set $\S_{f,g}$ is non-empty and uniformly bounded. Thus, we can define $\overline u$ as the pointwise supremum of functions in $\S_{f,g}$, that is,
\begin{equation*}
	\overline u (x)
	=
	\sup_{u\in\S_{f,g}}u(x)
\end{equation*}
for each $x\in\Omega_\eps$. The boundedness of $\overline u$ is immediate. Moreover, $\overline u$ is Borel measurable. Indeed, since $\overline u-\eps^2f$ can be expressed as the pointwise supremum of continuous functions $u-\eps^2f$ with $u\in\S_{f,g}$, it turns out that $\overline u-\eps^2f$ is lower semicontinuous in $\Omega$, and thus measurable, so the measurability of $\overline u$ follows.

By Lemma~\ref{lem:non-empty}, there exists at least one function $u$ in $\S_{f,g}$ such that $u=g$ in $\Gamma_\eps$, so $\overline u$ agrees with $g$ in $\Gamma_\eps$. On the other hand, since $\overline u\geq u$ for every $u\in\S_{f,g}$, then
\begin{equation*}
	u-\eps^2f
	\leq
	\frac{1}{2}\Big(\sup_{|z|=1}\mathcal{I}^z_\eps u+\inf_{|z|=1}\mathcal{I}^z_\eps u\Big)
	\leq
	\frac{1}{2}\Big(\sup_{|z|=1}\mathcal{I}^z_\eps\overline u+\inf_{|z|=1}\mathcal{I}^z_\eps\overline u\Big)
\end{equation*}
in $\Omega$. Taking the pointwise supremum in $\S_{f,g}$ we have that
\begin{align}
\label{eq:ol-u-sub}
\overline u-\eps^2f
	\leq
	\frac{1}{2}\Big(\sup_{|z|=1}\mathcal{I}^z_\eps\overline u+\inf_{|z|=1}\mathcal{I}^z_\eps\overline u\Big).
\end{align}
Hence $\overline u$ is a Borel measurable bounded subsolution to (\ref{sub-DPP}) with $\overline u=g$ in $\Gamma_\eps$. Next we show that $\overline u-\eps^2f$ is indeed continuous in $\Omega$. For this, let $\widetilde u:\Omega_\eps\to\R$ be the Borel measurable function defined by
\begin{equation*}
	\widetilde u
	=
	\begin{cases}
	\displaystyle\frac{1}{2}\Big(\sup_{|z|=1}\mathcal{I}^z_\eps\overline u+\inf_{|z|=1}\mathcal{I}^z_\eps\overline u\Big)+\eps^2f
	& \text{ in } \Omega,
	\\
	g & \text{ in } \Gamma_\eps.
	\end{cases}
\end{equation*}
Then $\overline u\leq\widetilde u$ in $\Omega$ by (\ref{eq:ol-u-sub}), so $\widetilde u$ is a subsolution to (\ref{sub-DPP}) since the right hand side above can be estimated from above by $\displaystyle\frac{1}{2}\Big(\sup_{|z|=1}\mathcal{I}^z_\eps\widetilde u+\inf_{|z|=1}\mathcal{I}^z_\eps\widetilde u\Big)+\eps^2f$. Observe also that $\widetilde u-\eps^2f$ is continuous in $\Omega$ by Lemma~\ref{lem:continuous}, so $\widetilde u\in\S_{f,g}$. Thus $\widetilde u\leq\overline u$, and in consequence $\overline u=\widetilde u\in\S_{f,g}$. Moreover 
\begin{equation*}
	\overline u
	=
	\frac{1}{2}\Big(\sup_{|z|=1}\mathcal{I}^z_\eps\overline u+\inf_{|z|=1}\mathcal{I}^z_\eps\overline u\Big)+\eps^2f
\end{equation*}
in $\Omega$ and the proof is finished.
\end{proof}

The uniqueness of solutions to (\ref{DPP-g}) is directly deduced from the following comparison principle. 

\begin{theorem}
\label{thm:comparison}
Let $f$ be a Borel measurable bounded function in $\Omega$, and let $u,v:\Omega_\eps\to\R$ be two Borel measurable solutions to the DPP (\ref{eq:dpp}) in $\Omega$ such that $u\leq v$ in $\Gamma_\eps$. Then $u\leq v$ in $\Omega$.
\end{theorem}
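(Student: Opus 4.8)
The plan is to argue by contradiction, comparing the two solutions through a maximum-of-difference argument, exploiting the strict positive contribution of the $\eps$-ball average that already appeared in the proof of Lemma~\ref{lem:boundedness}. Set $w = u - v$, which by (\ref{eq:dpp}) satisfies, for every $x \in \Omega$,
\begin{equation*}
	w(x)
	=
	\tfrac{1}{2}\Big(\sup_{|z|=1}\mathcal{I}^z_\eps u(x)+\inf_{|z|=1}\mathcal{I}^z_\eps u(x)\Big)
	-
	\tfrac{1}{2}\Big(\sup_{|z|=1}\mathcal{I}^z_\eps v(x)+\inf_{|z|=1}\mathcal{I}^z_\eps v(x)\Big),
\end{equation*}
since the $\eps^2 f(x)$ terms cancel. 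Extending $w$ by $w = u - v \le 0$ outside $\Omega$ (where both equal $g$-type boundary data, or are bounded), we suppose for contradiction that $M := \sup_{\R^N} w = \sup_\Omega w > 0$. The first key step is to show that this supremum is attained: since $u - \eps^2 f$ and $v - \eps^2 f$ are continuous in $\Omega$ by Lemma~\ref{lem:continuous} (and the solutions are bounded by Lemma~\ref{lem:boundedness}), $w = (u-\eps^2 f) - (v - \eps^2 f)$ is continuous on $\Omega$; combined with $w \le 0$ on $\Gamma_\eps$ and the boundedness, a compactness/closure argument gives a point $x_0 \in \overline\Omega$ with $w(x_0) = M$.

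Next, at (or near) $x_0$ I would use the sublinearity of $\sup$ and superadditivity of the pairing to bound $w(x_0)$ from above by an averaged expression in $w$ itself. Concretely, for any $|z|=1$, $\sup_{|z|=1}\mathcal{I}^z_\eps u - \sup_{|z|=1}\mathcal{I}^z_\eps v \le \sup_{|z|=1}(\mathcal{I}^z_\eps u - \mathcal{I}^z_\eps v) = \sup_{|z|=1}\mathcal{I}^z_\eps w$, and similarly for the infima; linearity of $\mathcal{I}^z_\eps$ (Remark~\ref{rem-average}) is what makes $\mathcal{I}^z_\eps u - \mathcal{I}^z_\eps v = \mathcal{I}^z_\eps w$ valid. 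Picking $|z_0| = 1$ that maximizes $\mathcal{I}^z_\eps w(x_0)$ (which exists by Lemma~\ref{lem:continuous-wrtz}), and bounding the infimum term below by $\mathcal{I}^{-z_0}_\eps w(x_0)$, one gets
\begin{equation*}
	w(x_0)
	\leq
	\tfrac{1}{2}\big(\mathcal{I}^{z_0}_\eps w(x_0)+\mathcal{I}^{-z_0}_\eps w(x_0)\big)
	=
	\frac{1}{2\gamma_{N,p}}\vint_{B_1}w(x_0+\eps h)\,|z_0\cdot h|^{p-2}\,dh.
\end{equation*}
Since $w \le M$ everywhere, the right-hand side is $\le M$, with equality forcing $w(x_0 + \eps h) = M$ for a.e.\ $h \in B_1$ with respect to the measure $|z_0 \cdot h|^{p-2}\,dh$, i.e.\ for a.e.\ $h \in B_1$. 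This is the propagation mechanism: the set $\{w = M\}$, up to measure zero, is invariant under adding $\eps h$ for a.e.\ $h \in B_1$.

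The final step is to run this propagation out of $\Omega$, exactly as in Lemma~\ref{lem:boundedness}: iterating the "a.e.\ in $B_\eps(x)$" conclusion and using the growth $|x + \eps h|^2 \ge |x|^2 + \eps^2/4$ on the set $S_x$, one shows $w = M$ a.e.\ on arbitrarily large balls, hence a.e.\ on a set meeting $\R^N \setminus \Omega$, where $w \le 0 < M$ — a contradiction. The main obstacle I anticipate is handling the "a.e." carefully: the equality case in the averaging inequality only gives $w = M$ almost everywhere, not pointwise, so one cannot immediately re-enter the pointwise DPP at a new base point. I would resolve this either by working with the set $A = \{x : w(x) = M\}$ and showing $|A \cap B_\eps(x)| = |B_\eps(x)|$ whenever $x \in A$ (so $A$ has density-one structure that propagates), or — more cleanly — by first observing from the continuity of $w$ on $\Omega$ that $\{w = M\} \cap \Omega$ is relatively closed and then arguing that a density-one subset forces, via the DPP again evaluated at nearby points where continuity is available, the pointwise equality to spread; once the equality set is shown to reach $\Gamma_\eps$ or its exterior, boundedness there closes the argument.
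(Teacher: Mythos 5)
Your outline follows the paper up to the contradiction set-up, but there is a genuine gap at the key averaging step. From the two DPPs you can only deduce, for $y\in\Omega$,
\begin{equation*}
	w(y)\;\leq\;\tfrac12\big(\mathcal{I}^{z_1}_\eps w(y)+\mathcal{I}^{z_2}_\eps w(y)\big)\;\leq\;\sup_{|z|=1}\mathcal{I}^{z}_\eps w(y),
\end{equation*}
where $z_1$ maximizes $\mathcal{I}^{z}_\eps u(y)$ and $z_2$ minimizes $\mathcal{I}^{z}_\eps v(y)$; there is no reason why $z_2=-z_1$, and nothing controls $\mathcal{I}^{-z_0}_\eps w$ when $z_0$ is chosen to maximize $\mathcal{I}^{z}_\eps w$. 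So your inequality $w(x_0)\leq\tfrac12\big(\mathcal{I}^{z_0}_\eps w(x_0)+\mathcal{I}^{-z_0}_\eps w(x_0)\big)$ does not follow from ``sublinearity of $\sup$'' (a short computation with asymmetric profiles $z\mapsto\mathcal{I}^z_\eps u$, $\mathcal{I}^z_\eps v$ shows it can fail), and hence the equality case only forces $w=M$ a.e.\ on the \emph{half}-ball $\{z_0\cdot h>0\}$, in a direction $z_0$ you do not control --- not a.e.\ in all of $B_1$. This matters for your last step: the Lemma~\ref{lem:boundedness}-type outward iteration needs points $x+\eps h$ with $x\cdot h\geq0$, and if $z_0$ points inward (say $z_0\approx -x/|x|$) the half-ball of equality meets $S_x$ only in a null set, so no outward progress is guaranteed and the ``propagate out of $\Omega$'' argument stalls inside a compact subset of $\overline\Omega$.

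The paper closes exactly this loophole differently: it works with the continuous extension $\widetilde w\in C(\overline\Omega)$ of $w$ (needed also because the supremum may be attained only on $\partial\Omega$, where the DPP is not available pointwise, so one passes to the limit using the continuity of $x\mapsto\sup_{|z|=1}\mathcal{I}^z_\eps w(x)$ from Lemma~\ref{lem:continuous}); then from $\mathcal{I}^{z_0}_\eps w(x)=M$ it upgrades, via continuity, the a.e.\ equality to $x+\eps h\in A=\{\widetilde w=M\}$ for \emph{every} $|h|\leq1$ with $z_0\cdot h\geq0$, and in particular for equator directions $z_0\cdot h=0$, $|h|=1$, which are available no matter where $z_0$ points. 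This shows every point of the compact set $A$ is the midpoint of two distinct points of $A$, contradicting the existence of an extremal point (e.g.\ a point of $A$ of maximal norm). To repair your proof you would need either a justification of the two-sided average bound (which I do not see) or this extremal-point argument, i.e.\ essentially the paper's route; your sketch of handling the ``a.e.\ versus pointwise'' issue via continuity of $w$ in $\Omega$ is fine as far as it goes, but it does not address the directional obstruction above.
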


\begin{proof}
For simplicity, we define $w=u-v$. Then $w$ is continuous in $\Omega$ by Lemma~\ref{lem:sup-inf-cont} and $w\leq 0$ in $\Gamma_\eps$. Furthermore, $w$ is uniformly continuous in $\Omega$, and thus we can define $\widetilde w:\Omega_\eps\to\R$ by
\begin{equation*}
	\widetilde w(x)
	=
	\begin{cases}
	\displaystyle\lim_{\Omega\ni y\to x}w(y) & \text{ if } x\in\partial\Omega,
	\\
	w(x) & \text{ otherwise,}
	\end{cases}
\end{equation*}
so that $\widetilde w\in C(\overline\Omega)$.
Let us suppose thriving for a  contradiction that
\begin{equation*}
	M
	=
	\sup_{\Omega_\eps}w
	=
	\max_{\overline\Omega}\widetilde w
	>
	0,
\end{equation*}
where the fact that $w\leq 0$ in $\Gamma_\eps$ is used above. By continuity, the set $A=\{x\in\overline\Omega\,:\,\widetilde w(x)=M\}$ is non-empty and closed. Indeed, since $\overline\Omega$ is bounded, then $A$ is compact.
For any fixed $y\in\Omega$, by Lemma~\ref{lem:continuous-wrtz} there exist $|z_1|=|z_2|=1$ such that
\begin{equation*}
	\mathcal{I}^{z_1}_\eps u(y)=\sup_{|z|=1}\mathcal{I}^z_\eps u(y)
	\qquad\text{ and }\qquad
	\mathcal{I}^{z_2}_\eps v(y)=\inf_{|z|=1}\mathcal{I}^z_\eps v(y).
\end{equation*}
Then
\begin{equation*}
\begin{split}
	w(y)
	=
	~&
	u(y)-v(y)
	\\
	=
	~&
	\frac{1}{2}\Big(\sup_{|z|=1}\mathcal{I}^z_\eps u(y)+\inf_{|z|=1}\mathcal{I}^z_\eps u(y)\Big)
	-
	\frac{1}{2}\Big(\sup_{|z|=1}\mathcal{I}^z_\eps v(y)+\inf_{|z|=1}\mathcal{I}^z_\eps v(y)\Big)
	\\
	\leq
	~&
	\frac{1}{2}\Big(\mathcal{I}^{z_1}_\eps u(y)+\mathcal{I}^{z_2}_\eps u(y)\Big)
	-
	\frac{1}{2}\Big(\mathcal{I}^{z_1}_\eps v(y)+\mathcal{I}^{z_2}_\eps v(y)\Big)
	\\
	=
	~&
	\frac{1}{2}\Big(\mathcal{I}^{z_1}_\eps w(y)+\mathcal{I}^{z_2}_\eps w(y)\Big)
	\\
	\leq
	~&
	\sup_{|z|=1}\mathcal{I}^z_\eps w(y).
\end{split}
\end{equation*}
Using this, for any $x\in A$,
\begin{equation*}
	M
	=
	\widetilde w(x)
	=
	\lim_{\Omega\ni y\to x}w(y)
	\leq
	\lim_{\Omega\ni y\to x}\Big(\sup_{|z|=1}\mathcal{I}^z_\eps w(y)\Big)
	=
	\sup_{|z|=1}\mathcal{I}^z_\eps w(x)
	\leq
	M,
\end{equation*}
where the continuity of $x \mapsto \displaystyle\sup_{|z|=1}\mathcal{I}^z_\eps w(x)$ by Lemma~\ref{lem:continuous} has been used in the last equality. That is, $\displaystyle\sup_{|z|=1}\mathcal{I}^z_\eps w(x)=M$, and again by Lemma~\ref{lem:continuous-wrtz}, there exists $|z_0|=1$ such that
\begin{equation*}
	\frac{1}{\gamma_{N,p}}\vint_{B_1}w(x+\eps h)(z_0\cdot h)^{p-2}_+\,dh
	=
	M.
\end{equation*}
By the definition of $\gamma_{N,p}$ and the fact that $w\leq M$, this implies that $w(x+\eps h)=M$ for a.e. $|h|<1$ such that $z_0\cdot h>0$. Then $x+\eps h\in A\subset\overline\Omega$ for a.e. $|h|<1$ such that $z_0\cdot h>0$. Moreover, by the continuity of $\widetilde w$ in $\ol \Om$, it turns out that $x+\eps h\in A$ for every $|h|\leq 1$ with $z_0\cdot h\geq 0$. In particular, picking any $|h|=1$ such that $z_0\cdot h=0$ we have that $x\pm\eps h\in A$, so $x=\frac{1}{2}(x+\eps h)+\frac{1}{2}(x-\eps h)$. That is, any point $x\in A$ is the midpoint between two different points $x_1,x_2\in A$. The contradiction then follows by choosing $x\in A$ to be an extremal point of $A$, that is, a point which cannot be written as a convex combination $\lambda x_1+(1-\lambda)x_2$ of points $x_1,x_2\in A$ with $\lambda\in(0,1)$ (take for instance any point $x\in A$ maximizing the Euclidean norm among all points in $A$). Then $M\leq 0$ and the proof is finished.
\end{proof}

\section{Regularity for the tug-of-war game with $1<p<2$}
\label{sec:reg}

The above DPP can also be stochastically interpreted. It is related to a two-player zero-sum game played in a bounded domain $\Om\subset \R^N$. When the players are at $x\in \Om$, they toss a fair coin and the winner of the toss may choose $z\in \Rn,\ \abs{z}=1$, so the next point is chosen according to the probability measure
\begin{align*}
A\mapsto \frac{1}{\gamma_{N,p}}\frac{1}{\abs{B_\eps(x)}}\int_{A\cap B_\eps(x)}\Big(z\cdot \frac{h-x}{\eps}\Big)_+^{p-2}\,dh.
\end{align*}
Then the players play a new round starting from the current position. 
When the game exits the domain and the first point outside the domain is denoted by $x_{\tau}$, Player II pays Player I the amount given by $F(x_{\tau})$, where  $F:\Rn\setminus \Om\to \R$
is a given payoff function. 
Since Player I gets the payoff at then end, she tries (heuristically speaking) to maximize the outcome,  and since Player II has to pay it, he tries to minimize it.  
This is a variant of a so called tug-of-war game considered for example in \cite{peresssw09, peress08, manfredipr12}. 
As explained in more detail in those references, $u$ denotes the value
of the game, i.e.\ the expected payoff of the game when players are optimizing over their strategies. Then for this $u$ the DPP holds and it can be heuristically interpreted by considering one round of the game and summing up the different outcomes (either Player I or Player II wins the toss) with the corresponding probabilities.

Next we show that if $u$ is a solution to the DPP (\ref{eq:dpp}), then it satisfies the extremal inequalities (when $1<p<2$ and also $p=2$) needed in order to apply the H\"older result in Theorem \ref{Holder}. However, in the case $2<p<\infty$ the DPP (\ref{eq:dpp}) does not have any Pucci bounds, as we explain later in Remark~\ref{remark-p>2}.
\begin{proposition}
\label{prop:satisfies-extremal}
Let $1<p<2$ and $u$ be a bounded Borel measurable function satisfying
\begin{equation*}
	u(x)
	=
	\frac{1}{2}\Big(\sup_{|z|=1}\mathcal{I}^z_\eps u(x)+\inf_{|z|=1}\mathcal{I}^z_\eps u(x)\Big)
	+
	\eps^2f(x).
\end{equation*}
Then $\L_\eps^+u+f\geq0$ and $\L_\eps^-u+f\leq0$ for some $1-\alpha=\beta>0$ depending on $N$ and $p$, where $\L_\eps^+$ and $\L_\eps^-$ are the extremal operators as in Definition~\ref{def:pucci}.
\end{proposition}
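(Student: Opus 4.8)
The plan is to decompose the operator $\mathcal{I}^z_\eps$ into a portion that is uniformly distributed on $B_1$ (which will play the role of the $\beta$-term in the extremal operators) plus a remainder that is absolutely continuous with a bounded density supported in $B_1$ (which fits inside the $\alpha$-supremum over $\M(B_\Lambda)$ with $\Lambda=1$). Concretely, recall that $\gamma_{N,p}>\tfrac12$ for $1<p<2$, and observe that the density $h\mapsto\tfrac{1}{\gamma_{N,p}}(z\cdot h)_+^{p-2}$ on $B_1$ is bounded below: on the half-ball $\{z\cdot h>0\}$ one does \emph{not} have a lower bound because of the singularity near $z\cdot h=0$, so instead I would argue that $\tfrac{1}{\gamma_{N,p}}(z\cdot h)_+^{p-2}\,dh$ dominates a constant multiple of Lebesgue measure on a fixed subset of $B_1$ of positive measure — or, more cleanly, symmetrize first. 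The key trick is that the DPP involves $\tfrac12(\sup_z \mathcal{I}^z_\eps u+\inf_z\mathcal{I}^z_\eps u)$, and for any $|z|=1$ one has $\mathcal{I}^z_\eps u(x)+\mathcal{I}^{-z}_\eps u(x)=\tfrac{1}{\gamma_{N,p}}\vint_{B_1}u(x+\eps h)|z\cdot h|^{p-2}\,dh$, whose density $\tfrac{1}{\gamma_{N,p}}|z\cdot h|^{p-2}$ on $B_1$ \emph{is} bounded below by $\tfrac{1}{\gamma_{N,p}}\geq$ a fixed positive constant (since $|z\cdot h|^{p-2}\ge 1$ is false, but $|z\cdot h|^{p-2}$ is bounded below only where $|z\cdot h|\le 1$, which is everywhere on $B_1$, giving $|z\cdot h|^{p-2}\ge 1$ because $p-2<0$ and $|z\cdot h|\le|h|<1$). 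This last observation is the crux: for $1<p<2$ and $|h|<1$ we have $|z\cdot h|\le 1$ hence $|z\cdot h|^{p-2}\ge 1$, so $\tfrac{1}{\gamma_{N,p}}|z\cdot h|^{p-2}\ge\tfrac{1}{\gamma_{N,p}}=:2\beta>0$.

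Carrying this out, set $\beta=\tfrac{1}{2\gamma_{N,p}}\in(0,1)$ and $\alpha=1-\beta$. For the upper extremal bound, fix $x$ and pick (by Lemma~\ref{lem:continuous-wrtz}) a maximizer $z_0$ of $\mathcal{I}^z_\eps u(x)$. Then
\[
	u(x)-\eps^2 f(x)
	\ge
	\tfrac12\big(\mathcal{I}^{z_0}_\eps u(x)+\mathcal{I}^{-z_0}_\eps u(x)\big)
	=
	\tfrac{1}{2\gamma_{N,p}}\vint_{B_1}u(x+\eps h)|z_0\cdot h|^{p-2}\,dh,
\]
and writing $|z_0\cdot h|^{p-2}=1+\big(|z_0\cdot h|^{p-2}-1\big)$ with the bracket $\ge 0$, the measure $\tfrac{1}{2\gamma_{N,p}}\big(|z_0\cdot h|^{p-2}-1\big)\,\tfrac{dh}{|B_1|}$ on $B_1$ is nonnegative with total mass exactly $\alpha$, hence after symmetrizing (it is already even in $h$) it is $\alpha$ times an element of $\M(B_1)\subset\M(B_\Lambda)$. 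Since $u(x+\eps h)$ appears rather than $\delta u(x,\eps h)$, I would exploit the evenness of all the densities involved to replace $u(x+\eps h)$ by $\tfrac12\big(u(x+\eps h)+u(x-\eps h)\big)$ and subtract the constant $2u(x)$ against the total mass $1=\beta+\alpha$ of the whole measure; this turns the right-hand side into $\beta\vint_{B_1}\tfrac12\delta u(x,\eps h)\,dh+\alpha\int_{B_1}\tfrac12\delta u(x,\eps h)\,d\nu_x(h)+u(x)$ for a suitable $\nu_x\in\M(B_1)$, which is $\le \eps^2\L_\eps^+u(x)+u(x)$. Rearranging gives $\L_\eps^+u(x)+f(x)\ge 0$. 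The lower bound $\L_\eps^-u(x)+f(x)\le 0$ is entirely symmetric, using a minimizer of $\mathcal{I}^z_\eps u(x)$ and the infimum over $\M(B_1)$.

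The main obstacle I anticipate is bookkeeping rather than conceptual: one must carefully verify that after subtracting off the uniform-on-$B_1$ part the leftover density integrates to exactly $\alpha$ (this is automatic since the full normalized density integrates to $1$ and the uniform part to $\beta$), that it is genuinely nonnegative (which is where $1<p<2$ and $|z\cdot h|<1$ enter), and that the map $x\mapsto\nu_x$ can be chosen so that $x\mapsto\int u(x+h)\,d\nu_x(h)$ is Borel measurable — here the density is an explicit function of $(x,h)$ through $z_0=z_0(x)$, so one should note that a measurable selection of the maximizer $z_0(x)$ exists (or simply bound the supremum over $\M(B_1)$ from below by the single measure coming from $z_0(x)$ without needing measurability of the selection, since $\L_\eps^+$ already contains a supremum over all of $\M(B_\Lambda)$). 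A secondary point is the appearance of $2u(x)$ versus $u(x)$: because we symmetrized, the constant that gets subtracted is $2u(x)$ distributed against total mass $1$, and one should double-check the factor of $\tfrac12$ in front of $\delta u$ is absorbed consistently with the $\tfrac{1}{2\eps^2}$ in Definition~\ref{def:pucci}. None of these is serious; the essential inequality $|z\cdot h|^{p-2}\ge 1$ on $B_1$ for $p<2$ is what makes the whole argument run, and it is precisely the reason the construction fails for $p>2$ (cf.\ Remark~\ref{remark-p>2}).
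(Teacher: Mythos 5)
Your construction is, in substance, the paper's own proof: the same choice $\beta=\frac{1}{2\gamma_{N,p}}$, the same splitting of the symmetrized density $\frac{1}{2\gamma_{N,p}}|z\cdot h|^{p-2}$ into the uniform part of mass $\beta$ plus a nonnegative remainder of mass $\alpha=1-\beta$ (nonnegative exactly because $|z\cdot h|\le|h|<1$ and $p-2<0$), and the same use of the antipodal pair $\pm z$ to convert $u(x+\eps h)$ into $\delta u(x,\eps h)$. Your remark that no measurable selection of $z_0(x)$ is needed is also correct and is exactly how the paper proceeds: one only needs the pointwise inequality $\L^-_\eps u(x)\le\frac{\mathcal{I}^{z}_\eps u(x)+\mathcal{I}^{-z}_\eps u(x)-2u(x)}{2\eps^2}\le\L^+_\eps u(x)$ for each fixed $|z|=1$, since $\L^\pm_\eps$ are defined through a sup/inf over all of $\M(B_\Lambda)$ at every point.

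The one genuine defect is the direction of your displayed inequality. If $z_0$ maximizes $z\mapsto\mathcal{I}^z_\eps u(x)$, the DPP gives
\begin{equation*}
	u(x)-\eps^2 f(x)
	=
	\tfrac12\Big(\mathcal{I}^{z_0}_\eps u(x)+\inf_{|z|=1}\mathcal{I}^{z}_\eps u(x)\Big)
	\le
	\tfrac12\big(\mathcal{I}^{z_0}_\eps u(x)+\mathcal{I}^{-z_0}_\eps u(x)\big),
\end{equation*}
i.e.\ the inequality is $\le$, not $\ge$ as you wrote. As stated, your chain reads $A\ge B$ together with $B\le u(x)+\eps^2\L^+_\eps u(x)$, from which $\L^+_\eps u+f\ge0$ does not follow. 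With the corrected direction the argument closes: $u(x)-\eps^2 f(x)\le u(x)+\eps^2\L^+_\eps u(x)$, hence $\L^+_\eps u(x)+f(x)\ge0$; and for the lower bound one takes $z_1$ minimizing $\mathcal{I}^z_\eps u(x)$, so that $u(x)-\eps^2 f(x)\ge\tfrac12\big(\mathcal{I}^{z_1}_\eps u(x)+\mathcal{I}^{-z_1}_\eps u(x)\big)\ge u(x)+\eps^2\L^-_\eps u(x)$, giving $\L^-_\eps u+f\le0$. The remaining bookkeeping you flag (total mass of the leftover measure equal to $\alpha$, the factor $\tfrac12$ in front of $\delta u$ matching the $\tfrac{1}{2\eps^2}$ in Definition~\ref{def:pucci}) is consistent, so after flipping that single inequality your proof coincides with the paper's.
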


\begin{proof}
Let
\begin{equation*}
	1-\alpha
	=
	\beta
	=
	\frac{1}{2\gamma_{N,p}}
	\in
	(0,1)
\end{equation*}
and, for $|z|=1$, consider the measure defined as
\begin{equation*}
	\nu(E)
	=
	\frac{1}{|B_1|}\int_{B_1\cap E}\frac{|z\cdot h|^{p-2}-1}{2\gamma_{N,p}-1}\,dh
\end{equation*}
for every Borel measurable set. Then $\nu\in\M(B_1)$. Indeed, by the definition of $\gamma_{N,p}$ and the fact that $\gamma_{N,p}>\frac{1}{2}$ for $1<p<2$, we have that $\nu$ is a positive measure such that $\nu(B_1)=1$.
Therefore,
\begin{equation*}
\begin{split}
	\alpha\int_{B_\Lambda}u(x+\eps h)\,d\nu(h)+\beta\vint_{B_1}u(x+\eps h)\,dh
	=
	~&
	\frac{1}{2}\cdot\frac{1}{\gamma_{N,p}}\vint_{B_1}u(x+\eps h)|z\cdot h|^{p-2}\,dh
	\\
	=
	~&
	\frac{1}{2}\Big(\mathcal{I}^z_\eps u(x)+\mathcal{I}^{-z}_\eps u(x)\Big),
\end{split}
\end{equation*}
so
\begin{equation*}
	\L^-_\eps u(x)
	\leq
	\frac{\mathcal{I}^z_\eps u(x)+\mathcal{I}^{-z}_\eps u(x)-2u(x)}{2\eps^2}
	\leq
	\L^+_\eps u(x)
\end{equation*}
for every $|z|=1$. Now, if $u$ is a solution to the DPP, then
\begin{equation*}
\begin{split}
	-f(x)
	=
	~&
	\frac{1}{2\eps^2}\Big(\sup_{|z|=1}\mathcal{I}^z_\eps u(x)+\inf_{|z|=1}\mathcal{I}^z_\eps u(x)-2u(x)\Big)
	\\
	\leq
	~&
	\sup_{|z|=1}\left\{\frac{\mathcal{I}^z_\eps u(x)+\mathcal{I}^{-z}_\eps u(x)-2u(x)}{2\eps^2}\right\}
	\\
	\leq
	~&
	\L^+_\eps u(x),
\end{split}
\end{equation*}
and similarly for $\L^-_\eps u(x)\leq-f(x)$.
\end{proof}

\begin{remark}\label{remark-p>2}
The extremal inequalities do not hold for $2<p<\infty$. Indeed, the map
\begin{equation*}
	E\mapsto\frac{1}{2\gamma_{N,p}|B_1|}\int_{B_1\cap E}|z\cdot h|^{p-2}\,dh
\end{equation*}
defines a probability measure in $B_1$ which is absolutely continuous with respect the Lebesgue measure, and whose density function vanishes as $h$ approaches an orthogonal direction to $z$ when $p>2$. Thus, it is not possible to decompose the measure as a convex combination of the uniform probability measure on $B_1$ and any probability measure $\nu$, which is an essential step in the proof of the H\"older estimate in \cite{arroyobp} and \cite{arroyobp2}.
\end{remark}

By Proposition \ref{prop:satisfies-extremal}, a solution $u$ to the DPP (\ref{eq:dpp}) satisfies the conditions of Theorem~\ref{Holder}. Thus it immediately follows that $u$ is asymptotically H\"older continuous.

\begin{corollary}
\label{cor:holder}
There exists $\eps_0>0$ such that if $u$ is a solution to the DPP (\ref{eq:dpp}) in $B_{R}$  where $\eps<\eps_0R$, there exist $C,\gamma>0$ (independent of $\eps$)  such that
\[
|u(x)-u(y)|\leq \frac{C}{R^\gamma}\left(\sup_{B_{R}}|u|+R^2\sup_{B_R}\abs{f}\right)\big(|x-y|^\gamma+\eps^\gamma\big)
\]
for every $x,y\in B_{R/2}$.

\end{corollary}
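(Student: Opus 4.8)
The plan is straightforward: combine Proposition~\ref{prop:satisfies-extremal} with Theorem~\ref{Holder}. Suppose $u$ solves the DPP~(\ref{eq:dpp}) in $B_R$ with right-hand side $\eps^2 f$. By Proposition~\ref{prop:satisfies-extremal}, applied pointwise at each $x\in B_R$, there is a choice of $1-\alpha=\beta=\tfrac{1}{2\gamma_{N,p}}>0$ (depending only on $N$ and $p$, since $1<p<2$ forces $\gamma_{N,p}>\tfrac12$ and hence $\beta\in(0,1)$) and a parameter $\Lambda=1$ for which the extremal inequalities
\[
	\L_\eps^+ u(x) + f(x) \ge 0
	\qquad\text{and}\qquad
	\L_\eps^- u(x) + f(x) \le 0
\]
hold for every $x\in B_R$. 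Rewriting, this says $\L_\eps^+ u \ge -f \ge -\sup_{B_R}|f|$ and $\L_\eps^- u \le -f \le \sup_{B_R}|f|$ throughout $B_R$, i.e.\ $u$ satisfies the hypotheses of Theorem~\ref{Holder} with $\rho = \sup_{B_R}|f|$ (and with the fixed $\beta$ above, which is legitimate since $\beta>0$ is all Theorem~\ref{Holder} requires).

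Then I would simply invoke Theorem~\ref{Holder}: there is $\eps_0>0$ (depending only on $N$, $\Lambda=1$, $\beta=\beta(N,p)$, hence ultimately on $N$ and $p$) such that whenever $\eps<\eps_0 R$, there exist constants $C,\gamma>0$ with the same dependence — in particular independent of $\eps$ — for which
\[
	|u(x)-u(y)| \le \frac{C}{R^\gamma}\Big(\sup_{B_R}|u| + R^2\rho\Big)\big(|x-y|^\gamma+\eps^\gamma\big)
\]
for all $x,y\in B_{R/2}$. Substituting $\rho = \sup_{B_R}|f|$ gives exactly the claimed estimate, and the proof is complete.

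There is essentially no obstacle here: the corollary is a direct concatenation of the two preceding results, and the only mild point worth stating explicitly is that the class $\M(B_\Lambda)$ in Definition~\ref{def:pucci} is taken with $\Lambda=1$ (the measure $\nu$ constructed in Proposition~\ref{prop:satisfies-extremal} is supported in $B_1$), so that all constants produced by Theorem~\ref{Holder} depend only on $N$, $\Lambda=1$ and $\beta=1-\alpha\in(0,1)$, and thus only on $N$ and $p$. I would phrase the proof in two or three sentences accordingly.

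\begin{proof}
Let $1<p<2$ and let $u$ be a solution to the DPP~(\ref{eq:dpp}) in $B_R$. By Proposition~\ref{prop:satisfies-extremal}, there exists $1-\alpha=\beta=\frac{1}{2\gamma_{N,p}}\in(0,1)$, depending only on $N$ and $p$, such that $\L_\eps^+u+f\geq 0$ and $\L_\eps^-u+f\leq 0$ in $B_R$, where $\L_\eps^\pm$ are the extremal operators of Definition~\ref{def:pucci} with $\Lambda=1$ (recall that the measure constructed in that proposition belongs to $\M(B_1)$). In particular,
\[
	\L_\eps^+u\geq -\sup_{B_R}|f|
	\qquad\text{and}\qquad
	\L_\eps^-u\leq \sup_{B_R}|f|
	\qquad\text{in } B_R.
\]
Hence $u$ satisfies the hypotheses of Theorem~\ref{Holder} with $\rho=\sup_{B_R}|f|$, and applying that theorem we obtain $\eps_0>0$ and $C,\gamma>0$ — all depending only on $N$ and $p$, and in particular independent of $\eps$ — such that, whenever $\eps<\eps_0 R$,
\[
	|u(x)-u(y)|\leq \frac{C}{R^\gamma}\Big(\sup_{B_R}|u|+R^2\sup_{B_R}|f|\Big)\big(|x-y|^\gamma+\eps^\gamma\big)
\]
for every $x,y\in B_{R/2}$. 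This is the claimed estimate.
\end{proof}
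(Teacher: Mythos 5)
Your proof is correct and follows exactly the paper's route: the paper also obtains the corollary immediately by combining Proposition~\ref{prop:satisfies-extremal} (which gives the extremal inequalities with $\beta=\frac{1}{2\gamma_{N,p}}$ and $\Lambda=1$) with Theorem~\ref{Holder} applied with $\rho=\sup_{B_R}|f|$. Your explicit tracking of the parameter dependence ($N$, $\Lambda=1$, $\beta(N,p)$) is a welcome, if minor, elaboration of what the paper leaves implicit.
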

 
\section{A connection to the $p$-laplacian}
\label{sec:p-lap}

In this section we consider a connection of solutions to the DPP (\ref{eq:dpp}) to the viscosity solutions to
\begin{align}
\label{eq:normpl}
	\Delta_p^N u=-f,
\end{align}
where we now assume $f\in C(\ol\Om)$.
Here $\Delta_p^Nu$ stands for the normalized $p$-Laplacian which is the non-divergence form operator
\begin{equation*}
	\Delta_p^Nu
	=
	\Delta u+(p-2)\frac{\prodin{D^2u\nabla u}{\nabla u}}{|\nabla u|^2}.
\end{equation*}
In \cite[Section 7]{arroyobp} it was already pointed out that in the case $2 \le p<\infty$ this follows (up to a multiplicative constant) for the dynamic programming principle describing the usual tug-of-war game (\ref{usual-DPP}), so here the main interest lies in the case $1<p<2$.

First, to establish the connection to the $p$-Laplace equation, we need to derive asymptotic expansions related to the DPP (\ref{eq:dpp}) for $C^2$-functions.
The expansion below holds for the full range $1<p<\infty$.
\begin{proposition}
\label{prop:asymp-exp}
Let $u\in C^2(\Omega)$. If $1<p<\infty$, then
\begin{equation}
\label{eq:asymp}
\begin{split}
	\mathcal{I}^z_\eps u(x)
	=
	~&
	u(x)+\eps\,\frac{\gamma_{N,p+1}}{\gamma_{N,p}}\,\nabla u(x)\cdot z
	\\
	~&
	+
	\frac{\eps^2}{2(N+p)}\left[\Delta u(x)+(p-2)\prodin{D^2u(x)z}{z}\right]+o(\eps^2).
\end{split}
\end{equation}

In particular, if $\nabla u(x)\neq 0$ and $z^*=\frac{\nabla u(x)}{|\nabla u(x)|}$, then
\begin{equation*}
	\lim_{\eps\to0}\frac{\mathcal{I}^{z^*}_\eps u(x)+\mathcal{I}^{-z^*}_\eps u(x)-2u(x)}{2\eps^2}
	=
	\frac{1}{2(N+p)}\,\Delta^N_pu(x).
\end{equation*}
\end{proposition}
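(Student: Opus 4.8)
The plan is to Taylor expand $u(x+\eps h)$ to second order inside the average defining $\mathcal{I}^z_\eps u(x)$ and then integrate term by term against the weight $(z\cdot h)_+^{p-2}/\gamma_{N,p}$ over $B_1$. Writing $u(x+\eps h) = u(x) + \eps\,\nabla u(x)\cdot h + \frac{\eps^2}{2}\langle D^2u(x)h,h\rangle + o(\eps^2\abs{h}^2)$ and using that $u\in C^2(\Omega)$, the error term is uniformly $o(\eps^2)$ on $B_1$, so it integrates to $o(\eps^2)$. The zeroth-order term just gives $u(x)$ since the weight is a probability density. It thus remains to compute the two moments
\begin{equation*}
	\frac{1}{\gamma_{N,p}}\vint_{B_1}(z\cdot h)_+^{p-2}\,h\,dh
	\qquad\text{and}\qquad
	\frac{1}{\gamma_{N,p}}\vint_{B_1}(z\cdot h)_+^{p-2}\,h\otimes h\,dh.
\end{equation*}

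For the first-moment computation I would rotate coordinates so that $z=e_1$. By oddness in $h_j$ for $j\geq2$, only the $h_1$-component survives, and $\vint_{B_1}(h_1)_+^{p-2}h_1\,dh = \vint_{B_1}(h_1)_+^{p-1}\,dh = \frac12\vint_{B_1}\abs{h_1}^{p-1}\,dh = \gamma_{N,p+1}$ by the defining formula (\ref{gamma-ctt}) applied with exponent $(p+1)-2=p-1$. Dividing by $\gamma_{N,p}$ gives the coefficient $\eps\,\frac{\gamma_{N,p+1}}{\gamma_{N,p}}\,\nabla u(x)\cdot z$. For the second moment, again take $z=e_1$; by oddness all off-diagonal entries vanish and the diagonal splits into two types. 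The $(1,1)$-entry is $\frac{1}{\gamma_{N,p}}\vint_{B_1}(h_1)_+^{p-2}h_1^2\,dh = \frac{1}{2\gamma_{N,p}}\vint_{B_1}\abs{h_1}^p\,dh$, while a generic $(j,j)$-entry with $j\geq2$ is $\frac{1}{2\gamma_{N,p}}\vint_{B_1}\abs{h_1}^{p-2}h_j^2\,dh$; the latter is the same for all $j\geq2$ by symmetry. These are the same radial-times-angular integrals that already appear in the paper (the identity labeled (\ref{integral-p3}) and the computation giving $\frac{N+p-2}{N+p}$ in the proof of Lemma~\ref{lem:non-empty}), so I would evaluate them by passing to polar coordinates: the radial part yields a factor $\tfrac{1}{N+p}$ relative to the normalization, and the angular averages of $h_1^2$ and $h_j^2$ over the sphere against the weight $\abs{h_1}^{p-2}$ produce the coefficients $\tfrac{p-1}{N+p}$ and $\tfrac{1}{N+p}$ respectively (consistent with their trace summing to $1$). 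Collecting terms, $\frac{\eps^2}{2}\langle D^2u(x)\,[\text{second moment}]\rangle = \frac{\eps^2}{2(N+p)}\bigl[\Delta u(x) + (p-2)\langle D^2u(x)z,z\rangle\bigr]$, which is exactly (\ref{eq:asymp}).

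Finally, the ``in particular'' statement is immediate: with $z^*=\nabla u(x)/\abs{\nabla u(x)}$, the odd first-order terms in the expansions of $\mathcal{I}^{z^*}_\eps u(x)$ and $\mathcal{I}^{-z^*}_\eps u(x)$ cancel when added, the $u(x)$ terms cancel against $-2u(x)$, and one is left with $\frac{\eps^2}{N+p}\bigl[\Delta u(x) + (p-2)\langle D^2u(x)z^*,z^*\rangle\bigr] + o(\eps^2)$; dividing by $2\eps^2$ and sending $\eps\to0$ gives $\frac{1}{2(N+p)}\bigl[\Delta u(x) + (p-2)\frac{\langle D^2u(x)\nabla u(x),\nabla u(x)\rangle}{\abs{\nabla u(x)}^2}\bigr] = \frac{1}{2(N+p)}\Delta_p^N u(x)$. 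The only genuinely delicate point is bookkeeping the moment integrals — making sure the integrability at $h_1=0$ is fine (it is, since $p-2>-1$) and that the angular and radial factors are combined correctly so that the trace identity holds as a consistency check; everything else is routine Taylor expansion.
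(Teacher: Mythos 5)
Your proposal is correct and follows essentially the same route as the paper's proof: second-order Taylor expansion inside the average, rotation of coordinates so that $z=e_1$, symmetry to kill the odd terms and the off-diagonal entries, and the moment integrals of Lemma~\ref{integral-p} giving the diagonal coefficients $\tfrac{p-1}{N+p}$ and $\tfrac{1}{N+p}$, after which the ``in particular'' statement follows by the same cancellation you describe. One minor bookkeeping slip in your polar-coordinate sketch: the trace of the normalized second-moment matrix is $\tfrac{N+p-2}{N+p}$ (this is exactly (\ref{integral-p3})), not $1$ --- it is the weighted angular averages of $h_1^2,\dots,h_N^2$ on the sphere that sum to $1$, while the radial factor relative to the normalization $2\gamma_{N,p}$ is $\tfrac{N+p-2}{N+p}$ rather than $\tfrac{1}{N+p}$ --- but this does not affect the argument, since the moment values you actually use are the correct ones from the paper's appendix.
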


\begin{proof}
For the sake of simplicity, we use the notation for the tensor product of (column) vectors in $\R^N$, $v\otimes w=vw^\top$, which allows to write $\prodin{Mv}{v}=\Tr\left\{M\,v\otimes v\right\}$. Using the second order Taylor's expansion of $u$ we obtain
\begin{align}
\label{eq:expansion}
	\frac{\mathcal{I}^z_\eps u(x)-u(x)}{\eps}
	=
	~&
	\frac{1}{\gamma_{N,p}}\vint_{B_1}\left(\nabla u(x)\cdot h+\frac{\eps}{2}\,\Tr\left\{D^2u(x) h\otimes h\right\}+o(\eps)\right)(z\cdot h)_+^{p-2}\,dh\nonumber
	\\
	=
	~&
	\nabla u(x)\cdot\left(\frac{1}{\gamma_{N,p}}\vint_{B_1}h\,(z\cdot h)_+^{p-2}\,dh\right)\nonumber
	\\
	~&
	+
	\frac{\eps}{2}\,\Tr\left\{D^2u(x)\left(\frac{1}{\gamma_{N,p}}\vint_{B_1}h\otimes h\,(z\cdot h)_+^{p-2}\,dh\right)\right\}+o(\eps).
\end{align}

In order to compute the first integral in the right-hand side of the previous identity, let $R$ be any orthogonal transformation such that $Re_1=z$ i.e.\ $e_1=R^\top z$. Then a change of variables $Rw=h$ yields
	\begin{align*}
\vint_{B_1}h\,(z\cdot h)_+^{p-2}\,dh
	=
	R\vint_{B_1}w\,(w_1)_+^{p-2}\,dw
\end{align*}
using $z\cdot Rw=z^\top Rw=(R^\top z)^\top w=e_1^\top w=w_1$. Going back to the original notation and observing by symmetry that
\begin{equation*}
	\vint_{B_1}h_i(h_1)_+^{p-2}\,dh
	=
	\begin{cases}
	\displaystyle\vint_{B_1}(h_1)_+^{p-1}\,dh
	=
	\gamma_{N,p+1}
	&
	\text{ if } i=1,
	\\
	0
	&
	\text{ if } i\neq1,
	\end{cases}
\end{equation*}
we get
\begin{align*}
\vint_{B_1}h\,(z\cdot h)_+^{p-2}\,dh=R\vint_{B_1}h\,(h_1)_+^{p-2}\,dh=\gamma_{N,p+1}Re_1=\gamma_{N,p+1}z.
\end{align*}
Next we repeat the change of variables in the second integral on the right hand side of (\ref{eq:expansion}) to get 
\begin{equation*}
\begin{split}
	\frac{1}{\gamma_{N,p}}\vint_{B_1}h\otimes h\,(z\cdot h)_+^{p-2}\,dh
	=
	~&
	R\left(\frac{1}{\gamma_{N,p}}\vint_{B_1}h\otimes h\,(h_1)_+^{p-2}\,dh\right)R^\top.
\end{split}
\end{equation*}
Observe that the integral in parenthesis above is a diagonal matrix. Indeed, for $i\neq j$, by symmetry,
\begin{equation*}
	\frac{1}{\gamma_{N,p}}\vint_{B_1}h_ih_j\,(h_1)_+^{p-2}\,dh
	=
	0.
\end{equation*}
In order to compute the diagonal elements, we utilize the explicit values of the normalization constants from Lemma~\ref{integral-p}, then
\begin{equation*}
	\frac{1}{\gamma_{N,p}}\vint_{B_1}h_i^2\,(h_1)_+^{p-2}\,dh
	=
	\frac{1}{2\gamma_{N,p}}\vint_{B_1}h_i^2\,|h_1|^{p-2}\,dh
	=
	\begin{cases}
	\frac{p-1}{N+p}, & i=1,\\[5pt]
	\frac{1}{N+p}, & i=2,\ldots,n.
	\end{cases}
\end{equation*}

Combining, we get
\begin{align*}
	\frac{1}{\gamma_{N,p}}\vint_{B_1}h\otimes h\,(z\cdot h)_+^{p-2}\,dh
	=~&
	R\left(\frac{1}{N+p}\,(I-e_1\otimes e_1)+\frac{p-1}{N+p}\,e_1\otimes e_1\right)R^\top
	\\
	=
	~&
	\frac{1}{N+p}\left(I+(p-2)z\otimes z\right).
\end{align*}
The proof is concluded after replacing these integrals in the expansion for $\mathcal{I}^z_\eps u(x)$.
\end{proof}

Next we show that the solutions to the DPP (\ref{eq:dpp}) converge  uniformly as $\eps\to 0$ to a viscosity solution of 
\begin{align*}
\Delta_p^Nu=-2(N+p)f.
\end{align*}

But before, we recall the definition of viscosity solutions for the convenience. Below $\lambda_{\max} (D^2\phi(x_0))$ and $\lambda_{\min} (D^2\phi(x_0))$ refer to the largest and smallest eigenvalue, respectively, of $D^2\phi(x_0)$. This definition is equivalent to the standard way of defining viscosity solutions through convex envelopes. Different definitions of viscosity solutions in this context are analyzed for example in \cite[Section 2]{kawohlmp12}. 

\begin{definition}
\label{eq:def-normalized-visc}
Let $\Om\subset\R^N$ be a bounded domain and $1<p<\infty$. A lower semicontinuous function $u$ is  a viscosity supersolution of (\ref{eq:normpl}) if for all $x_0\in\Om$  and $\phi\in C^2(\Om)$ such that $u-\phi$ attains a local minimum at $x_0$, one has
\begin{equation*}
\begin{cases}
	\Delta_p^N \phi(x_0)\le -f(x_0)\ &\text{if}\quad \nabla\phi(x_0)\neq 0,\\
	\Delta\phi(x_0)+(p-2)\lambda_{\max} (D^2\phi(x_0))\le -f(x_0)\ &\text{if}\quad\nabla\phi(x_0)=0\,\,\text{and}\,\, p\geq 2,\\
	\Delta\phi(x_0)+(p-2)\lambda_{\min} (D^2\phi(x_0))\le -f(x_0)\ &\text{if}\quad\nabla\phi(x_0)=0\,\,\text{and}\,\, 1<p<2.
\end{cases}
\end{equation*}
An upper semicontinuous function $u$ is a viscosity subsolution of (\ref{eq:normpl}) if $-u$ is a supersolution. We say that $u$ is a viscosity solution of (\ref{eq:normpl}) in $\Om$ if it is both a viscosity sub- and supersolution.
\end{definition}

\begin{theorem}
\label{thm:convergence}
Let $1<p<2$ and $\{u_\eps\}$ be a family of uniformly bounded Borel measurable solutions to the DPP (\ref{eq:dpp}). Then there is a subsequence and a H\"older continuous function $u$ such that 
\begin{align*}
u_{\eps}\to u \quad \text{locally uniformly.}
\end{align*} 
Moreover, $u$ is a viscosity solution to $\Delta_p^Nu=-2(N+p)f$.
\end{theorem}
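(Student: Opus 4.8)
The plan is to use the standard compactness-plus-stability argument for convergence of DPP solutions to viscosity solutions of the corresponding PDE. First I would establish local uniform equicontinuity of the family $\{u_\eps\}$. By hypothesis the $u_\eps$ are uniformly bounded, and each solves the DPP (\ref{eq:dpp}); by Corollary~\ref{cor:holder} (equivalently, Proposition~\ref{prop:satisfies-extremal} together with Theorem~\ref{Holder}), for any ball $B_{2R}\subset\Omega$ and $\eps<\eps_0 R$ one has the asymptotic H\"older bound $|u_\eps(x)-u_\eps(y)|\le C R^{-\gamma}(\sup_{B_R}|u_\eps|+R^2\sup_{B_R}|f|)(|x-y|^\gamma+\eps^\gamma)$ on $B_{R/2}$, with $C,\gamma$ independent of $\eps$. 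Combined with the uniform bound on $\|u_\eps\|_\infty$ and the fact that $f$ is bounded, this gives a uniform modulus of continuity on compact subsets of $\Omega$ once $\eps$ is small (the $\eps^\gamma$ term is harmless in the limit). Then by Arzel\`a--Ascoli and a diagonal argument over an exhaustion of $\Omega$ by compact sets, there is a subsequence $u_{\eps_j}$ converging locally uniformly to some $u$, which inherits local H\"older continuity.

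The second part is to show that the limit $u$ is a viscosity solution of $\Delta_p^N u=-2(N+p)f$. I would verify the supersolution property (the subsolution case being symmetric, applied to $-u$, noting the definition is set up that way, and that the $\eps$-rescaled DPP has a symmetric structure under $z\mapsto-z$). Fix $x_0\in\Omega$ and $\phi\in C^2$ with $u-\phi$ having a strict local minimum at $x_0$ (strictness can be arranged by subtracting $|x-x_0|^4$). By local uniform convergence, there exist points $x_{\eps_j}\to x_0$ where $u_{\eps_j}-\phi$ attains a local minimum over a fixed small ball, so $u_{\eps_j}(x_{\eps_j}+\eps_j h)-u_{\eps_j}(x_{\eps_j})\ge \phi(x_{\eps_j}+\eps_j h)-\phi(x_{\eps_j})$ for all $|h|\le 1$. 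Applying the operator $\tfrac12(\sup_{|z|=1}\mathcal{I}^z_{\eps_j}+\inf_{|z|=1}\mathcal{I}^z_{\eps_j})$ and using monotonicity (Remark~\ref{rem-average}) together with the DPP identity at $x_{\eps_j}$, I get
\begin{equation*}
	-\eps_j^2 f(x_{\eps_j})
	\ge
	\frac{1}{2}\Big(\sup_{|z|=1}\mathcal{I}^z_{\eps_j}\phi(x_{\eps_j})+\inf_{|z|=1}\mathcal{I}^z_{\eps_j}\phi(x_{\eps_j})\Big)-\phi(x_{\eps_j}).
\end{equation*}
Now I divide by $\eps_j^2/(2(N+p))$ and pass to the limit using the asymptotic expansion (\ref{eq:asymp}) from Proposition~\ref{prop:asymp-exp}: the first-order term $\eps_j\frac{\gamma_{N,p+1}}{\gamma_{N,p}}\nabla\phi(x_{\eps_j})\cdot z$ is odd in $z$, so it contributes equally with opposite sign to $\sup$ and $\inf$ only when $\nabla\phi(x_0)\ne 0$, in which case the optimizing directions converge to $\pm\nabla\phi(x_0)/|\nabla\phi(x_0)|$ and the limit of the second-order term is $\frac{1}{2(N+p)}\Delta_p^N\phi(x_0)$; hence $\Delta_p^N\phi(x_0)\le -2(N+p)f(x_0)$ after rescaling $f$. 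When $\nabla\phi(x_0)=0$, the first-order term vanishes in the limit and a separate argument selecting the worst eigenvalue gives $\Delta\phi(x_0)+(p-2)\lambda_{\min}(D^2\phi(x_0))\le -2(N+p)f(x_0)$, matching the $1<p<2$ branch of Definition~\ref{eq:def-normalized-visc}.

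The main obstacle is the case $\nabla\phi(x_0)=0$, where one must be careful about the interplay between the vanishing but nonzero gradients $\nabla\phi(x_{\eps_j})$ and the choice of optimal directions $z$. The $\sup$ over $|z|=1$ of $\eps_j\frac{\gamma_{N,p+1}}{\gamma_{N,p}}\nabla\phi(x_{\eps_j})\cdot z + \frac{\eps_j^2}{2(N+p)}[\Delta\phi+(p-2)\langle D^2\phi\, z,z\rangle]$ is a competition between a term of order $\eps_j|\nabla\phi(x_{\eps_j})|$ and one of order $\eps_j^2$; since for $1<p<2$ the coefficient $p-2<0$, the relevant extremal is the infimum of $\langle D^2\phi(x_0)z,z\rangle$, i.e. $\lambda_{\min}$, and one shows the gradient term cannot help the $\inf$ direction beat this in the limit (a standard but slightly delicate estimate: if $|\nabla\phi(x_{\eps_j})|/\eps_j\to\infty$ along a sub-subsequence one derives a contradiction with $u-\phi$ having an interior minimum, while if it stays bounded the gradient term is absorbed into the $o(\eps^2)$ after optimizing). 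I would also note that the uniform error term $o(\eps^2)$ in (\ref{eq:asymp}) is uniform in $z$ and in $x$ near $x_0$ because $\phi\in C^2$, which is what makes the limit passage under $\sup$ and $\inf$ legitimate; this uniformity is the technical point to state carefully but is not conceptually hard.
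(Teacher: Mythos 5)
Your overall route --- compactness from Corollary~\ref{cor:holder} plus consistency via the expansion of Proposition~\ref{prop:asymp-exp} --- is the same as the paper's, and your treatment of the case $\nabla\phi(x_0)\neq 0$ agrees with it. Two minor points on the first part: since the estimate only gives the modulus $|x-y|^\gamma+\eps^\gamma$, the family $\{u_\eps\}$ is not equicontinuous in the classical sense, and the paper invokes the asymptotic Arzel\`a--Ascoli lemma of \cite[Lemma 4.2]{manfredipr12}; your remark that ``the $\eps^\gamma$ term is harmless in the limit'' is precisely the content of that lemma, so this is presentational. Similarly, the paper allows an $o(\eps^2)$ error in the touching point instead of assuming the minimum of $u_\eps-\phi$ is attained.

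The genuine gap is in the critical-point case $\nabla\phi(x_0)=0$. Your case analysis on $|\nabla\phi(x_{\eps_j})|/\eps_j$ does not work: $x_{\eps_j}$ is a minimum point of $u_{\eps_j}-\phi$, not of $\phi$, so nothing forces $|\nabla\phi(x_{\eps_j})|=O(\eps_j)$ and no contradiction arises when the ratio blows up; and when it stays bounded, the first-order term is $O(\eps_j^2)$, the same order as the Hessian term, so it cannot be ``absorbed into $o(\eps^2)$''. More fundamentally, the strong branch $\Delta\phi(x_0)+(p-2)\lambda_{\min}(D^2\phi(x_0))\le-2(N+p)f(x_0)$ that you claim is not extractable from the DPP inequality at all: writing $G(z)=\mathcal{I}^z_\eps\phi(x_\eps)-\phi(x_\eps)$ and $Q(z)=\frac{\eps^2}{2(N+p)}\big[\Delta\phi(x_\eps)+(p-2)\langle D^2\phi(x_\eps)z,z\rangle\big]$, the quantity $\sup_zG+\inf_zG$ can be as small as $2Q(\hat z)+o(\eps^2)$ with $\hat z=\nabla\phi(x_\eps)/|\nabla\phi(x_\eps)|$ an uncontrolled direction, so the DPP only forces $\Delta\phi(x_0)+(p-2)\langle D^2\phi(x_0)\hat z,\hat z\rangle\le-2(N+p)f(x_0)$ for \emph{some} unit vector $\hat z$, i.e.\ (as $p-2<0$) only the inequality with $\lambda_{\max}$. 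The paper handles this by bounding $\sup_z+\inf_z$ from below by the symmetric pair $\mathcal{I}^{z_1^\eps}_\eps+\mathcal{I}^{-z_1^\eps}_\eps$, which cancels the gradient term identically (no case analysis needed), and then concludes exactly this weaker, worst-eigenvalue inequality; that this suffices rests on the equivalence of the definitions at critical points recalled after Definition~\ref{eq:def-normalized-visc} (cf.\ \cite{kawohlmp12}). To repair your argument, adopt the pairing trick and invoke that equivalence, rather than trying to reach $\lambda_{\min}$ directly.
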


\begin{proof}
First  we can use Asymptotic Arzel\`a-Ascoli's Theorem \cite[Lemma 4.2]{manfredipr12} in connection to Theorem~\ref{Holder} to find a locally uniformly converging subsequence to a H\"older continuous function.
 Then it remains to verify that the limit is a viscosity solution to the $p$-Laplace equation. For $\phi\in C^2(\Omega)$, fix $x\in \Om$. By Lemma~\ref{lem:continuous-wrtz}, there exists $|z_1^\eps|=1$ such that 
\[
\mathcal{I}^{z_1^\eps}_\eps\phi(x)=\inf_{|z|=1}\mathcal{I}^z_\eps \phi(x).
\]
By Proposition~\ref{prop:asymp-exp},
\begin{multline}
\label{eq:approx-ineq}
	\frac{\inf_{|z|=1}\mathcal{I}^{z}_\eps \phi(x)+\sup_{|z|=1}\mathcal{I}^{z}_\eps \phi(x)-2\phi(x)}{2\eps^2}
	\\
	\begin{split}
	\ge
	~&
	\frac{\mathcal{I}^{z_1^\eps}_\eps \phi(x)+\mathcal{I}^{-z_1^\eps}_\eps \phi(x)-2\phi(x)}{2\eps^2}
	\\
	=
	~&
	\frac{1}{2(N+p)}\left[\Delta \phi(x)+(p-2)\Tr\left\{D^2\phi(x)\, z_1^\eps\otimes z_1^\eps\right\}\right]+\frac{o(\eps^2)}{\eps^{2}}.
	\end{split}
\end{multline}

Let $u$ be the H\"older continuous limit obtained as a uniform limit of the solutions to the DPP. Choose a point $x_0\in \Omega$ and a $C^2$-function  $\phi$ defined in a neighborhood of $x_0$ touching $u$ at $x_0$ from below.
By the uniform convergence, there exists a sequence $x_{\eps} $ converging to $x_0$ such that $u_{\eps} - \phi $ has a minimum at $x_{\eps}$ (see \cite[Section 10.1.1]{evans10}) up to an error $\eta_{\eps}>0$, that is, there exists $x_{\eps}$ such that
\begin{equation*}
	u_{\eps} (y) - \phi (y)
	\geq
	u_{\eps} (x_{\eps}) - \phi(x_{\eps})-\eta_{\eps}
\end{equation*}
at the vicinity of $x_{\eps}$. The arbitrary error $\eta_{\eps}$ is due to the fact that $u_{\eps}$ may be discontinuous and we might not attain the infimum. Moreover, by adding a constant, we may assume that $\phi(x_{\eps}) = u_{\eps} (x_{\eps})$ so that $\phi$ approximately touches $u_{\eps}$ from below. Recalling the fact that $u_\eps$ is a solution to the DPP (\ref{eq:dpp}) and that $\mathcal{I}^z_\eps$ is monotone and  linear (see Remark~\ref{rem-average}), we have that
\begin{equation*}
	\mathcal{I}^z_\eps u_\eps(x_\eps)
	\geq
	\mathcal{I}^z_\eps\phi (x_\eps)+u_\eps(x_\eps)-\phi(x_\eps)-\eta_\eps.
\end{equation*}
Thus, by choosing $\eta_{\eps}=o(\eps^2)$, we obtain
\[
 \frac{o(\eps^2)}{\eps^2} \ge \frac{\inf_{|z|=1}\mathcal{I}^{z}_\eps \phi(x_{\eps})+\sup_{|z|=1}\mathcal{I}^{z}_\eps \phi(x_{\eps})-2\phi(x_{\eps})+2\eps^2 f(x_{\eps})}{2\eps^2}.
\]
Using (\ref{eq:approx-ineq}) at $x_{\eps}$ and combining this with the previous estimate, we obtain
\begin{align}
\label{eq:final-expansion}
	-f(x_{\eps})+\frac{o(\eps^2)}{\eps^2} \ge\frac{1}{2(N+p)}\left[\Delta \phi(x_{\eps})+(p-2)\Tr\left\{D^2\phi(x_{\eps})\, z_1^\eps\otimes z_1^\eps\right\}\right].
\end{align}

Let us assume first that $\nabla\phi(x_0)\neq 0$. By (\ref{eq:asymp}), we see that 
\begin{align*}
\lim_{\eps\to 0}z_{1}^{\eps}=-\frac{\nabla \phi(x_0)}{\abs{\nabla \phi(x_0)}},
\end{align*}
and thus we end up with
\begin{align*}
	-f(x_0) \ge\frac{1}{2(N+p)}\Delta_p^N \phi(x_0).
\end{align*}

Finally we consider the case $\nabla\phi(x_0)=0$. Similarly as above, (\ref{eq:final-expansion}) follows. Even if we now have no information on the convergence of $z_1^\eps$, since
\begin{equation*}
	\lambda_{\min}(M)
	\leq
	\Tr\left\{M\,z\otimes z\right\}
	\leq
	\lambda_{\max}(M)
\end{equation*}
for every $|z|=1$, we still can deduce
\begin{align*}
\begin{cases}
\Delta\phi(x_0)+(p-2)\lambda_{\min} (D^2\phi(x_0))\le -2(N+p)f(x_0)\quad &\text{if}\quad p\geq 2,\\
\Delta\phi(x_0)+(p-2)\lambda_{\max} (D^2\phi(x_0))\le -2(N+p)f(x_0)\quad &\text{if}\quad 1<p<2.
\end{cases}
\end{align*}

Thus we have shown that $u$ is a viscosity supersolution to the $p$-Laplace equation. Similarly, starting with $\sup$ instead of $\inf$, we can show that $u$ is a subsolution, and thus a solution. 
\end{proof}

\begin{remark}
For  $1<p<\infty$,  $u\in C^2(\Omega)$ and $x\in\Omega$ such that $\nabla u(x)\neq 0$ we could also show  that
\begin{equation*}
\begin{split}
	\frac{1}{2}\bigg(\sup_{|z|=1}\mathcal{I}^z_\eps u(x)+\inf_{|z|=1}\mathcal{I}^z_\eps u(x)\bigg)
	=
	~&
	u(x)+
	\frac{\eps^2}{2(N+p)}\,\Delta^N_pu(x)+o(\eps^2).
\end{split}
\end{equation*}
Indeed, by working carefully through the estimates similarly as in \cite[Lemmas 2.1--2.2]{peress08}, we could show
\begin{equation*}
\begin{split}
	\frac{1}{2}\bigg(\sup_{|z|=1}\mathcal{I}^z_\eps u(x)+\inf_{|z|=1}\mathcal{I}^z_\eps u(x)\bigg)
	=
	~&
	\frac{\mathcal{I}^{z^*}_\eps u(x)+\mathcal{I}^{-z^*}_\eps u(x)}{2}+o(\eps^2),
\end{split}
\end{equation*}
for $z^*=\frac{\nabla u(x)}{|\nabla u(x)|}$.
By Proposition~\ref{prop:asymp-exp}, we have
\begin{align*}
\frac{\mathcal{I}^{z^*}_\eps u(x)+\mathcal{I}^{-z^*}_\eps u(x)}{2}+o(\eps^2)
	=
	~&
	u(x)+\frac{\eps^2}{2(N+p)}\,\Delta^N_pu(x)+o(\eps^2),
\end{align*}
and combining these estimates we obtain the desired estimate. This would give an alternative way to write down the proof that the limit is a $p$-harmonic function. Reading this expansion in a viscosity sense gives a different characterization of $p$-harmonic functions as in \cite{manfredipr10} and \cite{kawohlmp12} by the same proof as above.  
\end{remark}


\appendix 
\section{Some useful integrals}

In the appendix, we record some useful integrals that, no doubt, are known to the experts but are hard to find in the literature.

\begin{lemma}
Let $\alpha_1,\ldots,\alpha_N>-1$. Then
\begin{equation}\label{integrals0}
	\vint_{B_1}|h_1|^{\alpha_1}\cdots|h_N|^{\alpha_N}\,dh_1\cdots dh_N
	=
	\frac{1}{\pi^{N/2}}\cdot\frac{\Gamma(\frac{N}{2}+1)\Gamma(\frac{\alpha_1+1}{2})\cdots\Gamma(\frac{\alpha_N+1}{2})}{\Gamma(\frac{N+\alpha_1+\cdots+\alpha_N+2}{2})}.
\end{equation}
\end{lemma}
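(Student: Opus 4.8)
The plan is to reduce the $N$-dimensional integral to a product of one-dimensional Beta-type integrals by first passing to polar coordinates and separating the radial part, and then recognizing the angular integral over the sphere via the Dirichlet-type formula. First I would write $\vint_{B_1}|h_1|^{\alpha_1}\cdots|h_N|^{\alpha_N}\,dh$ as $\frac{1}{|B_1|}\int_{B_1}|h_1|^{\alpha_1}\cdots|h_N|^{\alpha_N}\,dh$ and, using $h=r\omega$ with $r\in(0,1)$ and $\omega\in S^{N-1}$, factor the integrand as $r^{\alpha_1+\cdots+\alpha_N}\,|\omega_1|^{\alpha_1}\cdots|\omega_N|^{\alpha_N}$ times the volume element $r^{N-1}\,dr\,d\sigma(\omega)$. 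The radial integral $\int_0^1 r^{N+\alpha_1+\cdots+\alpha_N-1}\,dr = \frac{1}{N+\alpha_1+\cdots+\alpha_N}$ converges precisely because each $\alpha_i>-1$, so the exponent exceeds $-1$.

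Next I would handle the surface integral $\int_{S^{N-1}}|\omega_1|^{\alpha_1}\cdots|\omega_N|^{\alpha_N}\,d\sigma(\omega)$. The cleanest route is the classical Gaussian trick: compute $\int_{\R^N}|x_1|^{\alpha_1}\cdots|x_N|^{\alpha_N}e^{-|x|^2}\,dx$ in two ways. On one hand it factors as $\prod_{i=1}^N\int_{\R}|t|^{\alpha_i}e^{-t^2}\,dt = \prod_{i=1}^N \Gamma\!\left(\tfrac{\alpha_i+1}{2}\right)$ (each factor via the substitution $s=t^2$). On the other hand, in polar coordinates it equals $\left(\int_{S^{N-1}}|\omega_1|^{\alpha_1}\cdots|\omega_N|^{\alpha_N}\,d\sigma(\omega)\right)\int_0^\infty r^{N+\sum\alpha_i-1}e^{-r^2}\,dr = \left(\int_{S^{N-1}}\cdots\,d\sigma\right)\cdot\tfrac{1}{2}\Gamma\!\left(\tfrac{N+\sum\alpha_i}{2}\right)$. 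Solving gives the surface integral explicitly.

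Finally I would assemble the pieces: the desired average is
\begin{equation*}
	\frac{1}{|B_1|}\cdot\frac{1}{N+\sum_i\alpha_i}\cdot\frac{2\prod_i\Gamma\!\left(\tfrac{\alpha_i+1}{2}\right)}{\Gamma\!\left(\tfrac{N+\sum_i\alpha_i}{2}\right)},
\end{equation*}
and then substitute $|B_1|=\frac{\pi^{N/2}}{\Gamma(N/2+1)}$ together with the functional equation $\tfrac{1}{N+\sum_i\alpha_i}\cdot\tfrac{2}{\Gamma((N+\sum_i\alpha_i)/2)} = \tfrac{1}{\Gamma((N+\sum_i\alpha_i+2)/2)}$ (from $\Gamma(z+1)=z\Gamma(z)$ with $z=\tfrac{N+\sum_i\alpha_i}{2}$) to match the claimed right-hand side. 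I do not anticipate a genuine obstacle here; the only points requiring a little care are justifying the polar-coordinate factorization and Fubini on $\R^N$, which are legitimate since all integrands are nonnegative and the one-dimensional integrals $\int_\R|t|^{\alpha_i}e^{-t^2}\,dt$ converge for $\alpha_i>-1$, and tracking the Gamma-function bookkeeping so the final simplification lands exactly on $\Gamma\!\left(\tfrac{N+\alpha_1+\cdots+\alpha_N+2}{2}\right)$ in the denominator.
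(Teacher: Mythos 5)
Your proof is correct, but it takes a different route from the paper. You evaluate the spherical moment $\int_{S^{N-1}}|\omega_1|^{\alpha_1}\cdots|\omega_N|^{\alpha_N}\,d\sigma(\omega)$ via the classical Gaussian trick --- computing $\int_{\R^N}|x_1|^{\alpha_1}\cdots|x_N|^{\alpha_N}e^{-|x|^2}\,dx$ once as a product of one-dimensional Gamma integrals and once in polar coordinates --- and then attach the elementary radial factor $\frac{1}{N+\sum_i\alpha_i}$, absorbing it into the denominator by $\Gamma(z+1)=z\Gamma(z)$. The paper instead proceeds by iterated one-dimensional reduction inside the ball: it integrates out the last coordinate, rescales the remaining $(N-1)$-dimensional ball by $\sqrt{1-t^2}$, recognizes the resulting $t$-integral as a Beta integral $\int_0^1 t^{x-1}(1-t)^{y-1}\,dt=\frac{\Gamma(x)\Gamma(y)}{\Gamma(x+y)}$, and telescopes the Gamma factors down to a single one-dimensional integral. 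Your argument is shorter and more transparent once the polar-coordinate factorization and Tonelli (all integrands nonnegative, one-dimensional integrals finite for $\alpha_i>-1$) are granted, and it isolates the spherical moment formula, which is of independent use; the paper's recursion is more pedestrian but stays entirely within ball integrals and the one-dimensional Beta--Gamma identity, never invoking surface measure on the sphere or Gaussian integrals. Both yield exactly the stated constant, including the $\Gamma\bigl(\tfrac{N+\alpha_1+\cdots+\alpha_N+2}{2}\bigr)$ in the denominator and the $\frac{\Gamma(N/2+1)}{\pi^{N/2}}$ coming from $|B_1|$.
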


\begin{proof}
For convenience, we denote by $B_1^N$ the $N$-dimensional unit ball centered at the origin. We decompose the integral over $B_1^N$ by integrating in the first place with respect to $t=h_N$, that is
\begin{multline*}
	\int_{B_1^N}|h_1|^{\alpha_1}\cdots|h_N|^{\alpha_N}\,dh_1\cdots dh_N
	\\
	=
	\int_{-1}^1|t|^{\alpha_N}\left(\int_{\sqrt{1-t^2}\,B_1^{N-1}}|h_1|^{\alpha_1}\cdots|h_{N-1}|^{\alpha_{N-1}}\,dh_1\cdots dh_{N-1}\right)\,dt.
\end{multline*}
Then the change of variables $\sqrt{1-t^2}(w_1,\ldots,w_{N-1})=(h_1,\ldots,h_{N-1})$  and returning to the original notation gives
\begin{align*}
	&\int_{B_1^N}|h_1|^{\alpha_1}\cdots|h_N|^{\alpha_N}\,dh_1\cdots dh_N
	\\
	&=
	\int_{-1}^1|t|^{\alpha_N}(1-t^2)^{\frac{N+\alpha_1+\ldots+\alpha_{N-1}-1}{2}}\,dt
	\cdot\int_{B_1^{N-1}}|h_1|^{\alpha_1}\cdots|h_{N-1}|^{\alpha_{N-1}}\,dh_1\cdots dh_{N-1}.
\end{align*}

Next we focus on the the first integral in the right hand side. Using the symmetry properties and performing a change of variables we get
\begin{align*}
\int_{-1}^1|t|^{\alpha_N}(1-t^2)^{\frac{N+\alpha_1+\ldots+\alpha_{N-1}-1}{2}}\,dt
	=
	~&
	2\int_0^1t^{\alpha_N}(1-t^2)^{\frac{N+\alpha_1+\ldots+\alpha_{N-1}-1}{2}}\,dt
	\\
	=
	~&
	\int_0^1t^{\frac{\alpha_N-1}{2}}(1-t)^{\frac{N+\alpha_1+\ldots+\alpha_{N-1}-1}{2}}\,dt
	\\
	=
	~&
	\frac{\Gamma(\tfrac{\alpha_N+1}{2})\Gamma(\tfrac{N+\alpha_1+\ldots+\alpha_{N-1}+1}{2})}{\Gamma(\tfrac{N+\alpha_1+\ldots+\alpha_{N-1}+\alpha_N+2}{2})},
\end{align*}
where for the last equality we have recalled the well-known formula arising in connection to the $\beta$-function
\begin{equation*}
	\int_0^1t^{x-1}(1-t)^{y-1}\,dt
	=
	\frac{\Gamma(x)\Gamma(y)}{\Gamma(x+y)}
\end{equation*}
for $x,y>0$. In the same way, we in general obtain
\begin{multline*}
	\int_{B_1^{N-k}}|h_1|^{\alpha_1}\cdots|h_{N-k}|^{\alpha_{N-k}}\,dh_1\cdots dh_{N-k}
	\\
\begin{split}
	=&
	\int_{-1}^1|t|^{\alpha_{N-k}}(1-t^2)^{\frac{\alpha_1+\ldots+\alpha_{N-k-1}+N-k-1}{2}}\,dt
	\\
	&\hspace{1 em}\cdot\int_{B_1^{N-k-1}}|h_1|^{\alpha_1}\cdots|h_{N-k-1}|^{\alpha_{N-k-1}}\,dh_1\cdots dh_{N-k-1}\nonumber
\\
=&\int_{0}^1|t|^{\frac{\alpha_{N-k}+1}{2}-1}(1-t)^{\frac{\alpha_1+\ldots+\alpha_{N-k-1}+N-k+1}{2}-1}\,dt
	\\
	&\hspace{1 em}\cdot\int_{B_1^{N-k-1}}|h_1|^{\alpha_1}\cdots|h_{N-k-1}|^{\alpha_{N-k-1}}\,dh_1\cdots dh_{N-k-1}
\end{split}
\end{multline*}

Iterating the above formula, and dividing out the repeating terms we get
\begin{multline*}
\int_{B_1^N}|h_1|^{\alpha_1}\cdots|h_N|^{\alpha_N}\,dh_1\cdots dh_N\\
= \frac{1}{\Gamma(\tfrac{\alpha_1+\ldots+\alpha_N+N+2}{2})}\Gamma(\tfrac{\alpha_N+1}{2})\cdots \Gamma(\tfrac{\alpha_2+1}{2}) \Gamma(\tfrac{\alpha_1+3}{2})\int_{-1}^1|h_1|^{\alpha_1}\,dh_1
\end{multline*}
Then using the definition of the Gamma function and integration by parts, we get
\begin{equation*}
	\Gamma(\tfrac{\alpha_1+3}{2})\int_{-1}^1|h_1|^{\alpha_1}\,dh_1
	=\Gamma(\tfrac{\alpha_1+1}{2}+1)\frac{2}{\alpha_1+1}
	=\Gamma(\tfrac{\alpha_1+1}{2}).
\end{equation*}
Finally, the result follows by recalling that
\begin{equation*}
	|B_1^N|
	=
	\frac{\pi^{N/2}}{\Gamma(\tfrac{N}{2}+1)}.\qedhere
\end{equation*}
\end{proof}

 The next lemma follows as a special case of the previous result. This lemma is the one we actually use in the proofs.
\begin{lemma}\label{integral-p}
Let $1<p<\infty$. Then
\begin{equation}\label{integral-p0}
	\frac{1}{2\gamma_{N,p}}\vint_{B_1}|h_1|^p\,dh
	=
	\frac{p-1}{N+p}
\end{equation}
and
\begin{equation}\label{integral-p2}
	\frac{1}{2\gamma_{N,p}}\vint_{B_1}|h_1|^{p-2}|h_2|^2\,dh
	=
	\frac{1}{N+p}.
\end{equation}
\end{lemma}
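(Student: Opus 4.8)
The plan is to obtain both identities as immediate consequences of the general formula (\ref{integrals0}), exploiting the fact that the normalization constant is itself an integral of the same type: by (\ref{gamma-ctt}) we have $2\gamma_{N,p}=\vint_{B_1}|h_1|^{p-2}\,dh$, so that the left-hand sides of (\ref{integral-p0}) and (\ref{integral-p2}) are ratios of integrals all covered by (\ref{integrals0}).

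First I would specialize (\ref{integrals0}) to the three exponent profiles that occur, namely $(\alpha_1,\dots,\alpha_N)=(p-2,0,\dots,0)$, $(p,0,\dots,0)$ and $(p-2,2,0,\dots,0)$, whose exponent sums are $p-2$, $p$ and $p$ respectively:
\begin{align*}
\vint_{B_1}|h_1|^{p-2}\,dh
&=\frac{1}{\pi^{N/2}}\cdot\frac{\Gamma(\tfrac N2+1)\,\Gamma(\tfrac12)^{N-1}\,\Gamma(\tfrac{p-1}{2})}{\Gamma(\tfrac{N+p}{2})},\\
\vint_{B_1}|h_1|^{p}\,dh
&=\frac{1}{\pi^{N/2}}\cdot\frac{\Gamma(\tfrac N2+1)\,\Gamma(\tfrac12)^{N-1}\,\Gamma(\tfrac{p+1}{2})}{\Gamma(\tfrac{N+p+2}{2})},\\
\vint_{B_1}|h_1|^{p-2}|h_2|^2\,dh
&=\frac{1}{\pi^{N/2}}\cdot\frac{\Gamma(\tfrac N2+1)\,\Gamma(\tfrac12)^{N-2}\,\Gamma(\tfrac{p-1}{2})\,\Gamma(\tfrac32)}{\Gamma(\tfrac{N+p+2}{2})}.
\end{align*}
Then I would simply take quotients. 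Dividing the second line by the first, everything cancels except $\Gamma(\tfrac{p+1}{2})\Gamma(\tfrac{N+p}{2})/\big(\Gamma(\tfrac{p-1}{2})\Gamma(\tfrac{N+p+2}{2})\big)$, and the functional equation $\Gamma(x+1)=x\Gamma(x)$ (with $x=\tfrac{p-1}{2}$ in the numerator and $x=\tfrac{N+p}{2}$ in the denominator) collapses this to $\tfrac{(p-1)/2}{(N+p)/2}=\tfrac{p-1}{N+p}$, which is (\ref{integral-p0}). Dividing the third line by the first, the surviving factor is $\dfrac{\Gamma(\tfrac32)}{\Gamma(\tfrac12)}\cdot\dfrac{\Gamma(\tfrac{N+p}{2})}{\Gamma(\tfrac{N+p+2}{2})}$; using $\Gamma(\tfrac32)=\tfrac12\Gamma(\tfrac12)$ and again $\Gamma(x+1)=x\Gamma(x)$ with $x=\tfrac{N+p}{2}$, this equals $\tfrac12\cdot\tfrac{2}{N+p}=\tfrac1{N+p}$, which is (\ref{integral-p2}).

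I do not expect a genuine obstacle here; the only point requiring a little attention is the bookkeeping of the powers of $\pi$ (equivalently of $\Gamma(\tfrac12)=\sqrt\pi$), since the three profiles contain different numbers of vanishing exponents — but in each ratio these powers cancel exactly against the explicit $\Gamma(\tfrac12)$ or $\Gamma(\tfrac32)$ factors, leaving only the elementary Gamma-function manipulations above. One could alternatively bypass (\ref{integrals0}) and compute the two integrals directly via the slicing and Beta-function argument used in its proof, but invoking the lemma is shorter.
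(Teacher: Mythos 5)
Your proposal is correct and follows essentially the same route as the paper: both apply the general formula (\ref{integrals0}) to the exponent profiles $(p-2,0,\dots,0)$, $(p,0,\dots,0)$ and $(p-2,2,0,\dots,0)$ and then reduce the resulting Gamma-function ratios via $\Gamma(s+1)=s\,\Gamma(s)$, $\Gamma(\tfrac12)=\sqrt{\pi}$ and $\Gamma(\tfrac32)=\tfrac{\sqrt{\pi}}{2}$. The only cosmetic difference is that the paper first records the explicit value of $\gamma_{N,p}$ in (\ref{eq:precise-constant}) and rewrites each integral as a multiple of it, whereas you take the quotients directly; the computations are identical.
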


\begin{proof}
First we recall the definition of $\gamma_{N,p}$ in (\ref{gamma-ctt}) and use (\ref{integrals0}) to obtain 
\begin{equation}
\label{eq:precise-constant}
	\gamma_{N,p}
	=
	\vint_{B_1}(z\cdot h)_+^{p-2}\,dh
	=
	\frac{1}{2}\vint_{B_1}|h_1|^{p-2}\,dh
	=
	\frac{1}{2\sqrt{\pi}}\cdot\frac{\Gamma(\frac{N}{2}+1)\Gamma(\frac{p-1}{2})}{\Gamma(\frac{N+p}{2})}.
\end{equation}
\sloppy
Since $\Gamma(s+1)=s\,\Gamma(s)$ and $\Gamma(\frac{1}{2})=\sqrt\pi$, applying the identity (\ref{integrals0}) with $\alpha_1=p$ and $\alpha_2=\ldots=\alpha_N=0$ we have
\begin{equation*}
	\vint_{B_1}|h_1|^p\,dh
	=
	\frac{1}{\sqrt{\pi}}\cdot\frac{\Gamma(\frac{N}{2}+1)\Gamma(\frac{p+1}{2})}{\Gamma(\frac{N+p+2}{2})}
	=
	\frac{p-1}{N+p}\cdot\frac{1}{\sqrt{\pi}}\cdot\frac{\Gamma(\frac{N}{2}+1)\Gamma(\frac{p-1}{2})}{\Gamma(\frac{N+p}{2})},
\end{equation*}
and (\ref{integral-p0}) follows by combining the previous formulas. Similarly, since $\Gamma(\frac{3}{2})=\frac{\sqrt\pi}{2}$,
\begin{equation*}
\begin{split}
	\vint_{B_1}|h_1|^{p-2}|h_2|^2\,dh
	=
	\frac{1}{\pi}\cdot\frac{\Gamma(\frac{N}{2}+1)\Gamma(\frac{p-1}{2})\Gamma(\frac{3}{2})}{\Gamma(\frac{N+p+2}{2})}
	=
	\frac{1}{N+p}\cdot\frac{1}{\sqrt{\pi}}\cdot\frac{\Gamma(\frac{N}{2}+1)\Gamma(\frac{p-1}{2})}{\Gamma(\frac{N+p}{2})},
\end{split}
\end{equation*}
and (\ref{integral-p2}) follows.
\end{proof}

\begin{corollary}
Let $1<p<\infty$ and $|z|=1$. Then
\begin{equation}\label{integral-p3}
	\frac{1}{2\gamma_{N,p}}\vint_{B_1}|h|^2|z\cdot h|^{p-2}\,dh
	=
	\frac{N+p-2}{N+p}.
\end{equation}
\end{corollary}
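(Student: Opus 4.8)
The plan is to reduce to the case $z=e_1$ by an orthogonal change of variables, decompose $|h|^2=\sum_{i=1}^N h_i^2$, and apply Lemma~\ref{integral-p} term by term.

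First I would observe that both $B_1$ and the integrand depend on $z$ only through $z\cdot h$, so if $R$ is any orthogonal transformation with $Re_1=z$, the substitution $h=Rw$ (exactly as in the proof of Proposition~\ref{prop:asymp-exp}, using $z\cdot Rw=w_1$ and $|Rw|=|w|$) gives
\begin{equation*}
	\vint_{B_1}|h|^2|z\cdot h|^{p-2}\,dh
	=
	\vint_{B_1}|w|^2(w_1)_+^{p-2}\,dw+\vint_{B_1}|w|^2(-w_1)_+^{p-2}\,dw
	=
	\vint_{B_1}|w|^2|w_1|^{p-2}\,dw,
\end{equation*}
so it suffices to compute $\tfrac{1}{2\gamma_{N,p}}\vint_{B_1}|w|^2|w_1|^{p-2}\,dw$.

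Next, writing $|w|^2=w_1^2+w_2^2+\cdots+w_N^2$ and using that $B_1$ is invariant under permutations of the coordinates $w_2,\ldots,w_N$ (each of which leaves $|w_1|^{p-2}$ unchanged), all $N-1$ cross terms are equal, so
\begin{equation*}
	\vint_{B_1}|w|^2|w_1|^{p-2}\,dw
	=
	\vint_{B_1}|w_1|^p\,dw+(N-1)\vint_{B_1}|w_1|^{p-2}|w_2|^2\,dw.
\end{equation*}
Dividing by $2\gamma_{N,p}$ and substituting the values (\ref{integral-p0}) and (\ref{integral-p2}) from Lemma~\ref{integral-p} yields
\begin{equation*}
	\frac{1}{2\gamma_{N,p}}\vint_{B_1}|h|^2|z\cdot h|^{p-2}\,dh
	=
	\frac{p-1}{N+p}+(N-1)\,\frac{1}{N+p}
	=
	\frac{N+p-2}{N+p},
\end{equation*}
which is (\ref{integral-p3}).

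There is essentially no obstacle here. The only points requiring (minor) care are that $|w_1|^{p-2}$ is integrable over $B_1$ for every $p>1$ (already noted after (\ref{gamma-ctt})), so every integral above is finite and the splitting of the sum is legitimate, and that the permutation symmetry of the ball is what forces the $N-1$ cross terms to coincide.
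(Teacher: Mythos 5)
Your proof is correct and follows essentially the same route as the paper: reduce to $z=e_1$ by symmetry (which you make explicit via a rotation), expand $|h|^2=\sum_i h_i^2$, and invoke (\ref{integral-p0}) and (\ref{integral-p2}) for the two types of terms. The only difference is cosmetic — the intermediate splitting of $|w_1|^{p-2}$ into its two plus-parts is unnecessary but harmless.
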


\begin{proof}
By symmetry, we can take $z=e_1$, so $z\cdot h=h_1$. Then
$|h|^2|h_1|^{p-2}=|h_1|^p+|h_1|^{p-2}|h_2|^2+\ldots+|h_1|^{p-2}|h_N|^2$, so
\begin{equation*}
	\vint_{B_1}|h|^2|z\cdot h|^{p-2}\,dh
	=
	\vint_{B_1}|h_1|^p\,dh+(N-1)\vint_{B_1}|h_1|^{p-2}|h_2|\,dh,
\end{equation*}
so (\ref{integral-p3}) follows from (\ref{integral-p0}) and (\ref{integral-p2}).
\end{proof}


\noindent\textbf{Acknowledgements.}
\'A.~A.\ is supported by grant PID2021-123151NB-I00.

\end{document}